\numberwithin{equation}{section}
\pgfplotsset{compat=newest}
\title[Transition Analysis: From the Airy and Pearcey Kernels to the Sine Kernel]{Transition Analysis: From the Airy and Pearcey Kernels to the Sine Kernel}
\author[]{Thorsten Neuschel$^*$}
\address{$^*$School of Mathematical Sciences, Dublin City University}
\email{thorsten.neuschel@dcu.ie}
\author[]{Martin Venker$^\dag$}
\address{$^*$School of Mathematical Sciences, Dublin City University}
\email{martin.venker@dcu.ie}
\newcommand{\R}{\mathbb{R}}
\newcommand{\Ai}{{\rm Ai}}
\newtheorem{thm}{Theorem}[section]
\theoremstyle{remark}
\newtheorem{remark}[thm]{Remark}
\newcommand{\lb}{\left(}
\newcommand{\rb}{\right)}
\renewcommand{\t}{\tau}
\newcommand{\e}{\epsilon}
\newcommand{\w}{\omega}
\newcommand{\z}{\zeta}
\begin{document}

\begin{abstract} 
	We study transitions between the three universal limiting kernels Airy, Pearcey and sine kernel, arising in Random Matrix Theory at edge, cusp and bulk points of the spectrum. Under appropriate rescalings, we provide complete asymptotic expansions of the extended Airy kernel and the extended Pearcey kernel approaching the extended sine kernel, expliciting the fluctuations.
\end{abstract}

\keywords{Airy line ensemble, Pearcey
	process, sine process, transition, asymptotic expansion, random matrices, Dyson's Brownian motion}
\maketitle

\section{Introduction and Statement of Results}

The sine kernel, the Airy  kernel and the Pearcey kernel are central to the theory of universality in random matrix theory and beyond. They describe the limiting correlations of eigenvalues of many classes of random complex Hermitian matrices in the bulk of the spectrum (sine kernel), at the edge of the spectrum (Airy kernel), and at cusps in the spectrum (Pearcey kernel), respectively. Moving from an edge or cusp point into the bulk of the spectrum motivates the following question: is it possible to pass from one kernel to another and can we describe the transitions explicitly in terms of asymptotic expansions?
In this work we answer these questions affirmatively, discussing transitions from the Airy to the sine kernel, and from the Pearcey kernel to the sine kernel, which also allows us to study the fluctuations of both kernels around the sine kernel in detail.

We will consider the kernels in their time-dependent versions, which are usually called \textit{extended} kernels. These arise naturally in the best-understood dynamic model of random matrices, called Non-Intersecting Brownian Motions. To define this process, let $(M(t))_{t\geq0}$ be an $n\times n$ Hermitian Brownian motion, i.e.~$M(t)$ is an $n\times n$ matrix whose diagonal entries are real Brownian motions and off-diagonal entries are complex Brownian motions not necessarily starting in 0, and all Brownian motions on or above the diagonal are independent. Then the eigenvalues process of $(n^{-1/2}M(t))_{t\geq0}$ is called Non-Intersecting Brownian Motions (NIBM), the name stemming from the surprising fact that the eigenvalues of a Hermitian Brownian motion have the same law as $n$ real independent Brownian motions conditioned on non-intersection \cite{Grabiner}. Another name for this process is Dyson's Brownian motion \cite{Dyson}. NIBM is a space-time determinantal process, meaning that the correlations, i.e.~joint intensities, of the point process of eigenvalues at $k$ times and locations can be expressed as determinants of a matrix, each entry of the matrix being an evaluation of a kernel $K_{n,s,t}(x,y)$, where $s,t>0$ are time variables and $x, y\in\R$ space variables. We refer to \cite{CNV2} for details on space-time correlation functions and an explicit form of the kernel $K_n:=K_{n,s,t}$. Here, it suffices to remark that the determinantality of NIBM allows to study the large $n$ asymptotics of the process of the $n$ eigenvalues by studying the asymptotics of $K_n$, which just has four variables apart from $n$. The main objects of this paper arise in a so-called local, or microscopic, rescaling of $s,t,x,y$ according to the type of region of the spectrum one is looking at. Generally speaking, a point in the spectrum is chosen and then spatial variables are rescaled to match the average spacing between consecutive eigenvalues around that point. At an edge point $x_0$ at time $t_0$ this rescaling is of the form 
\begin{align}
	x=x_0+\frac{u}{cn^{2/3}},\quad y=x_0+\frac{v}{cn^{2/3}},\quad u,v\in\R
\end{align} for some constant $c$. Time is then rescaled around $t_0$ as
\begin{align}
	s=t_0+\frac{\t_1}{c'n^{1/3}},\quad t=t_0+\frac{\t_2}{c'n^{1/3}}, \quad \t_1,\t_2\in\R
\end{align} 
for some constant $c'$. With these rescalings the kernel $K_n$ converges as $n\to\infty$ to the \textit{extended Airy kernel} (see \cite{NV} and references therein), defined as 
	\begin{align}\label{extended_Airy}
	\mathbb K^{\rm Ai}_{\t_1,\t_2}(u,v)&:=\frac{1}{(2\pi i)^2} \int_{\Sigma^{\rm Ai}} d\zeta \int_{\Gamma^{\rm Ai}} d\w ~ \frac{\exp\lb\frac{\zeta^3}3-\zeta v-\t_2\zeta^2-\frac{\w^3}3+\w u+\t_1\w^2\rb}{\zeta-\w}\\
	&-1(\t_1>\t_2)\frac{1}{\sqrt{4\pi(\t_1-\t_2)}}\exp\lb-\frac{(u-v)^2}{4(\t_1-\t_2)}\rb,
	\end{align}
	where $\t_1,\t_2\in\R$ are time and $u,v\in\R$ are spatial arguments. The contour $\Sigma^{\Ai}$ consists of the two rays from $\infty e^{-i\frac{\pi}3}$ to 0 and from 0 to  $\infty e^{i\frac{\pi}3}$ and $\Gamma^{\rm Ai}$ consists of the two rays from $\infty e^{-i\frac{2\pi}3}$ to 0 and from 0 to  $\infty e^{i\frac{2\pi}3}$, see Figure \ref{Airy_contours}. For $\t_1,\t_2=0$, the extended Airy kernel $\mathbb K^{\rm Ai}_{0,0}(u,v)$ coincides with the classical Airy kernel
	\begin{align}
		\mathbb K^\Ai(u,v):=\frac{\Ai(u)\Ai'(v)-\Ai'(u)\Ai(v)}{u-v},
	\end{align}
 where $\Ai$ is the Airy function.

	It is worth noting that the definition of the extended Airy kernel is not uniform across the literature and is usually informed by the specific model under consideration. For example, the definitions in \cite{PS,Johansson03,BorodinKuan,Petrov} on random growth models differ from ours by a conjugation and a shift of variables. Our definition is similar to the one in \cite{DuseMetcalfe}. We may speak of variants of the same kernel and remark that up to trivial rescaling they all generate the same stochastic process of infinitely many lines, the Airy line ensemble \cite{CorwinHammond}.
Statistics of the Airy line ensemble have been found for large classes of random matrices and related models, see e.g.~\cite{Soshnikov,PasturShcherbina03,BEY,EYbook,KSSV,KV}. It also appears in a number of studies of interacting particle systems belonging to the KPZ universality class, see e.g.~\cite{Corwinsurvey,SpohnLN,KK} and references therein.

\begin{figure}[h]\label{Airy_contours}
\centering
\begin{tikzpicture}
    \begin{axis}[
        axis lines=middle,
        xlabel={$\textcolor{red}{\Re \zeta}$, $\textcolor{blue}{\Re \omega}$ },
        ylabel={$\textcolor{red}{\Im \zeta}$, $\textcolor{blue}{\Im \omega}$  },
        xmin=-4, xmax=4,
        ymin=-4, ymax=4,
        domain=-4:4,
        samples=100,
        axis equal,
        xtick={-4,-2,0,2,4},
        ytick={-4,-2,0,2,4}
    ]
    \addplot [thick, red, domain=0:4, samples=100] ({x*cos(60)}, {x*sin(60)});
    \addplot [thick, red, domain=0:4, samples=100] ({x*cos(-60)}, {x*sin(-60)});
    
    \addplot [thick, blue, domain=0:4, samples=100] ({x*cos(120)}, {x*sin(120)});
    \addplot [thick, blue, domain=0:4, samples=100] ({x*cos(-120)}, {x*sin(-120)});
    
    
    \end{axis}
\end{tikzpicture}

\caption{Contours of the Airy kernel the complex plane. The red rays \(\Sigma^{\rm Ai}\) correspond to the rays from $\infty e^{-i\frac{\pi}{3}}$ to 0 and from 0 to $\infty e^{i\frac{\pi}{3}}$. The blue rays \(\Gamma^{\rm Ai}\) correspond to the rays from $\infty e^{-i\frac{2\pi}{3}}$ to 0 and from 0 to $\infty e^{i\frac{2\pi}{3}}$.}
\label{fig:contours}
\end{figure}
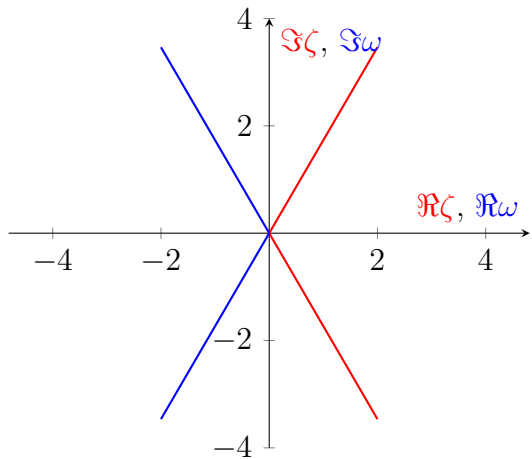

\medskip 

At a cusp point in the spectrum, usually the \textit{extended Pearcey kernel} makes an appearance. 
It is defined as
\begin{align}
	\mathbb K^{\rm P}_{\t_1,\t_2}(u,v):&=\frac{1}{(2\pi i)^2} \int_{i\R} d\zeta \int_{\Gamma^{\rm P}} d\w ~ \frac{\exp\lb-\frac{\zeta^4}4-\frac{\t_2\zeta^2}2-v\zeta +\frac{\w^4}4+\frac{\t_1\w^2}2+u\w\rb}{\zeta-\w}\\
	&-1(\t_1>\t_2)\frac{1}{\sqrt{2\pi(\t_1-\t_2)}}\exp\lb-\frac{(u-v)^2}{2(\t_1-\t_2)}\rb,\label{eq:Pearceykernel}
	\end{align}
	where $\t_1,\t_2\in\R$ are time and $u,v\in\R$ spatial arguments, and $\Gamma^{\rm P}$ consists of four rays, two from the origin to $\pm\infty e^{-i\pi/4}$ and two from $\pm\infty e^{i\pi/4}$ to the origin, see Figure \ref{Pearcey_contours}.  The Pearcey process defined by this kernel has so far been mainly associated with the situation of two bulks of random particles merging, see e.g.~\cite{TracyWidom2,Erdosetal,CapitainePeche,BrezinHikami,BK1,LiechtyWang1,LiechtyWang2,GeudensZhang,OkounkovReshetikin,Erdosetalreal}. The typical rescaling around a cusp $x_0$ at time $t_0$ is spatially 
	\begin{align}
		x=x_0+\frac{u}{cn^{3/4}},\quad y=x_0+\frac{v}{cn^{3/4}},\quad u,v\in\R\label{scaling:Pearcey}
	\end{align} for some constant $c$, and temporally
	\begin{align}
		s=t_0+\frac{\t_1}{c'n^{1/2}},\quad t=t_0+\frac{\t_2}{c'n^{1/2}}, \quad \t_1,\t_2\in\R
	\end{align} 
for some $c'$.

\begin{figure}[h]
\centering
\begin{tikzpicture}
    \begin{axis}[
        axis lines=middle,
        xlabel={$\Re \omega$},
        ylabel={$\Im \omega$},
        xmin=-4, xmax=4,
        ymin=-4, ymax=4,
        domain=-4:4,
        samples=100,
        axis equal,
        xtick={-4,-2,0,2,4},
        ytick={-4,-2,0,2,4}
    ]
    \addplot [thick, blue, domain=0:4, samples=100] ({x*cos(45)}, {x*sin(45)});
    \addplot [thick, blue, domain=0:4, samples=100] ({x*cos(-45)}, {x*sin(-45)});
    
    \addplot [thick, blue, domain=0:4, samples=100] ({-x*cos(45)}, {-x*sin(45)});
    \addplot [thick, blue, domain=0:4, samples=100] ({-x*cos(-45)}, {-x*sin(-45)});
    
    
    \end{axis}
\end{tikzpicture}
\caption{The contour \(\Gamma^P\) of the Pearcey kernel.}
\label{Pearcey_contours}
\end{figure}
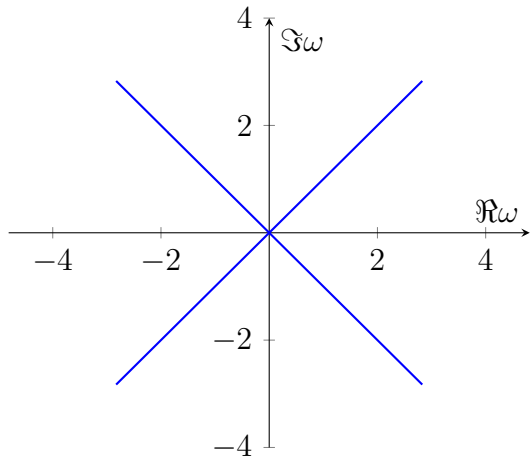

This paper is concerned with convergence of the extended Airy and Pearcey kernels to the limiting bulk kernel, the \textit{extended sine kernel} \cite{TracyWidom04,AvM05}. It is defined as
\begin{align}\label{extended_sine}
	\mathbb K^{\rm sine}_{\t_1,\t_2}(u,v)&:=\frac{1}{2\pi } \int_{-\pi}^{\pi}  ~ e^{\frac{1}{2}(\t_1 - \t_2)\w^2 +i(u-v)\w} d\w\\
	&-1(\t_1>\t_2)\frac{1}{\sqrt{2\pi(\t_1-\t_2)}}\exp\lb-\frac{(u-v)^2}{2(\t_1-\t_2)}\rb,
	\end{align}
where $\t_1,\t_2\in\R$ are time and $u,v\in\R$ spatial arguments.  For \(\t_1 = \t_2 = \t\) the extended sine kernel coincides with the classical sine kernel 
\[\mathbb K^{\rm sine}_{\t,\t}(u,v) = \frac{\sin \pi (u-v)}{\pi (u-v)}.\]

 Like the other kernels, there are different variants of this definition in the literature (the definition in this paper is for example from \cite{KatoriTanemura}). Embracing this variety, for the sake of simpler constants and clearer representation in our theorems and proofs, we introduce the following variant of the sine kernel by
\begin{align}\label{S_1-kernel}
	\mathbb K^{\rm S_1}_{\t_1,\t_2}(u,v)&:=\frac{1}{2\pi i} \int_{-i}^{i}  ~ e^{(\t_1 - \t_2)\w^2 +(u-v)\w} d\w\\
	&-1(\t_1>\t_2)\frac{1}{\sqrt{4\pi(\t_1-\t_2)}}\exp\lb-\frac{(u-v)^2}{4(\t_1-\t_2)}\rb,
	\end{align}
with $u,v,\t_1,\t_2\in\R$. It is easily checked that the two kernels are related by
\begin{equation}\label{connS} \pi \mathbb K^{\rm S_1}_{\frac{\pi^2 \t_1}{2}, \frac{\pi^2\t_2}{2}}(\pi u, \pi v) = \mathbb K^{\rm sine}_{\t_1,\t_2}(u,v), \quad u,v,\t_1,\t_2 \in \mathbb{R}.
\end{equation}
The extended sine kernel usually arises in NIBM around a bulk point $x_0$ at time $t_0$ under the rescaling 
\begin{align}
	x=x_0+\frac{u}{cn},\quad y=x_0+\frac{v}{cn},\quad u,v\in\R
\end{align} for some $c$, with time rescaling
\begin{align}
	s=t_0+\frac{\t_1}{c'n},\quad t=t_0+\frac{\t_2}{c'n}, \quad \t_1,\t_2\in\R
\end{align} 
for some $c'$.

In our first result we study the transition from the extended Airy kernel to the extended sine kernel.

\begin{thm}\label{thm:Airytosine} 
	We have for any $u,v,\t_1, \t_2 \in\R$
	\begin{align}\label{trans:AirytoS}	\frac{1}{\sqrt{a}} & \mathbb  K^{\rm Ai}_{\t_1/a,\t_2/a} \left(\frac{u}{\sqrt{a}} -a, \frac{v}{\sqrt{a}} -a \right)=\mathbb K^{\rm S_1}_{\t_1,\t_2}(u,v) + \mathrm{fluc^{\mathrm S_1}}(u,v; \t_1, \t_2; a) +\mathcal{O}\left(a^{-3}\right)
\end{align}
as \(a\to+\infty\), uniformly in $u,v,\t_1, \t_2 $ when restricted to compact subsets of \(\mathbb{R}\),  where the fluctuations are given by
\[\mathrm{fluc^{\mathrm{S}_1}}(u,v; \t_1, \t_2; a) = \frac{\pi}{(2\pi i)^2} \frac{1}{a^{3/2}} e^{-(\tau_1 -\tau_2)} \left( f_{\tau_1, \tau_2}^{\mathrm{S}_1} (u,v) + \cos\left(\frac{4}{3} a^{3/2} -(u+v)\right)\right),\]
with 
\[f_{\tau_1, \tau_2}^{\mathrm{S}_1} (u,v) =  (u+v)\cos(u-v) -2(\tau_1 + \tau_2) \sin(u-v).\]
More generally, for the Airy to  sine kernel transition there exists a complete asymptotic expansion of the form
\begin{align}\label{Exp:AirytoS} \frac{1}{\sqrt{a}} & \mathbb  K^{\rm Ai}_{\t_1/a,\t_2/a} \left(\frac{u}{\sqrt{a}} -a, \frac{v}{\sqrt{a}} -a \right) \approx \mathbb K^{\rm S_1}_{\t_1,\t_2}(u,v) \\
&+ \sum_{\nu=1}^{\infty} \frac{2}{(2\pi i)^2 a^{\frac{3}{2} \nu}} \sum_{\substack{k,l \geq 0\\ k+l = 2\nu -1}} \Re\left\{b_{k,l} B_{k,l}\right\}\\
&+\sum_{\nu=1}^{\infty} \frac{2}{(2\pi i)^2 a^{\frac{3}{2} \nu}} \sum_{\substack{k,l \geq 0\\ k+l = \nu -1}} \Re\left\{c_{2k,2l}e^{i\frac{4}{3} a^{\frac{3}{2}}}\right\}\Gamma\left(k+\frac{1}{2}\right)\Gamma\left(l +\frac{1}{2}\right), 
\end{align}
as \(a \to +\infty\), uniformly in \(u,v,\t_1, \t_2\), each coming from a compact subset of \(\mathbb{R}\). The coefficients \(b_{k,l}\) are defined in \eqref{varphi_exp}, the coefficients \(B_{k,l}\) are defined in \eqref{def:B}, and the coefficients \(c_{k,l}\) are defined in \eqref{varPhi_exp}.

\end{thm}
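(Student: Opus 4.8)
The plan is to insert the rescaling directly into the double-contour integral defining $\mathbb K^{\rm Ai}_{\t_1/a,\t_2/a}$, change variables to make the exponent naturally of order one, and then do a saddle-point / Watson's-lemma style analysis with explicit tracking of all terms. Concretely, writing $\z = a^{-1/2}\tilde\z$ (and likewise for $\w$), the cubic term $\z^3/3$ becomes $a^{-3/2}\tilde\z^3/3$, the shift $-\z(v/\sqrt a - a) = \tilde\z(a^{1/2}\cdot a^{1/2} - a^{-1/2}\cdot a^{1/2}v)/\sqrt{a}\cdot a^{-1/2}\ldots$ — more carefully, after this scaling the leading part of the exponent in each variable is linear, $\pm a^{1/2}\tilde\w$, which is what produces the contour $[-i,i]$ for the sine kernel once the contours $\Gamma^{\rm Ai},\Sigma^{\rm Ai}$ are deformed; the subleading corrections are powers of $a^{-3/2}$ coming from the cubic terms and powers of $a^{-1}$ from the quadratic ($\t$) terms. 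The term $\frac43 a^{3/2}$ in the fluctuation is the value of the Airy phase function $\tfrac13\z^3 + a\z$ at its saddle $\z = i\sqrt a$, so the oscillatory contributions $e^{\pm i\frac43 a^{3/2}}$ arise precisely from the endpoint/saddle contributions at $\pm i\sqrt a$, while the non-oscillatory part (the $f^{\rm S_1}_{\t_1,\t_2}$ piece and the leading sine kernel) comes from the part of the contour near the origin.

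The key steps, in order: (1) Substitute the rescaling and factor the exponent as $E(\z) = a^{3/2}\varphi(a^{-1/2}\z) $-type expressions, identifying the $\z$- and $\w$-integrals as (deformed) contour integrals over the imaginary segment $[-i,i]$ plus tails; the $1/(\z-\w)$ factor is handled as in the standard derivation of the extended sine kernel from the extended Airy kernel (e.g.\ splitting the double integral, or writing $1/(\z-\w) = \int_0^\infty e^{-t(\z-\w)}\,dt$ with appropriate signs depending on $\t_1\lessgtr\t_2$, which also produces the Gaussian correction term and makes it match $\mathbb K^{\rm S_1}$'s Gaussian term automatically). (2) On the segment near the origin, Taylor-expand $\exp(a^{-3/2}(\cdot)\z^3 + a^{-1}(\cdot)\z^2)$ in powers of $a^{-3/2}$ and $a^{-1}$; each monomial $\z^k\w^l$ integrated against $e^{(\t_1-\t_2)(\w^2-\z^2)/\ldots + (u-v)(\cdot)}$ over $[-i,i]$ gives elementary trigonometric integrals, producing the $b_{k,l}B_{k,l}$ coefficients. (3) For the oscillatory part, localize near the endpoints $\pm i$ of the rescaled segment (equivalently near the Airy saddles $\pm i\sqrt a$): a local change of variables turns these into Gaussian-type integrals, and Watson's lemma yields the asymptotic series $\sum c_{2k,2l}\,e^{i\frac43 a^{3/2}}\Gamma(k+\tfrac12)\Gamma(l+\tfrac12)/a^{3\nu/2}$, with the $\Gamma(k+\tfrac12)\Gamma(l+\tfrac12)$ being exactly the moments of the Gaussian. (4) Collect the $\nu=1$ terms explicitly to verify the stated closed form of $\mathrm{fluc}^{\rm S_1}$, in particular that $f^{\rm S_1}_{\t_1,\t_2}(u,v) = (u+v)\cos(u-v) - 2(\t_1+\t_2)\sin(u-v)$ and that the oscillatory $\nu=1$ piece is $\cos(\tfrac43 a^{3/2} - (u+v))$ up to the stated prefactor. (5) Prove uniformity on compacts and the error bound $\mathcal O(a^{-3})$ (resp.\ the validity of the full expansion as an asymptotic series) by estimating the contour tails — exponentially small — and bounding the Taylor remainder uniformly for $u,v,\t_1,\t_2$ in a fixed compact set.

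I expect the main obstacle to be the bookkeeping in step (2)–(3): keeping the two expansion parameters ($a^{-3/2}$ from the cubic terms and $a^{-1}$ from the quadratic terms) organized so that they combine into a single series in $a^{-3/2\nu}$ as claimed, and correctly matching the contributions of the near-origin part and the two endpoints without double-counting. A secondary technical point is justifying the contour deformation from $\Gamma^{\rm Ai},\Sigma^{\rm Ai}$ to the segment $[-i,i]$ plus controlled tails uniformly in the parameters, and in particular ensuring that the $1/(\z-\w)$ singularity is never crossed and that the resulting splitting exactly reproduces the Gaussian term of $\mathbb K^{\rm S_1}$ rather than some perturbation of it. Once the structure is set up, the individual integrals are elementary (trigonometric integrals and Gaussian moments), so the real work is organizational rather than analytic.
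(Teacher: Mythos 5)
Your overall strategy (rescale $\zeta,\omega$ by $\sqrt a$, deform contours, extract a single integral over $[-i,i]$ from the $1/(\zeta-\omega)$ factor, then perform a saddle-point analysis in which Gaussian moments produce the $\Gamma(k+\tfrac12)\Gamma(l+\tfrac12)$ factors) is broadly the right skeleton, and your identification of the oscillation $e^{\pm i\frac43 a^{3/2}}$ with the phase at the saddles is essentially correct. However, there is a genuine structural error in your steps (1)--(2) that would make the argument fail. After the rescaling the phase is $a^{3/2}\bigl(f(\zeta)-f(\omega)\bigr)$ with $f(\zeta)=\zeta^3/3+\zeta$, and $f$ is purely imaginary on the whole imaginary axis; hence on the segment $[-i,i]$ the exponential has modulus one, there is no decay, no localization at the origin, and ``Taylor-expanding near the origin'' does not produce an asymptotic expansion. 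The stationary points of $f$ are exactly $\pm i$, i.e.\ the endpoints of your segment. In the paper's proof the leading term $\frac{1}{2\pi i}\int_{-i}^{i}e^{(\t_1-\t_2)\omega^2+(u-v)\omega}\,d\omega$ is obtained \emph{exactly} (not asymptotically) as the residue picked up when the two contours are pushed through each other so that they intersect at $\pm i$; the remaining double integral then localizes entirely at the four saddle points $(\pm i,\pm i)$ and is $\mathcal O(a^{-3/2})$ in total.

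The second, related gap concerns the origin of the non-oscillatory fluctuation $f^{\mathrm{S}_1}_{\t_1,\t_2}$ and of the coefficients $B_{k,l}$. These do not come from elementary trigonometric integrals near the origin; they come from the two \emph{coincident} saddles $(i,i)$ and $(-i,-i)$, at which the pole $1/(\zeta-\omega)$ sits exactly on top of the saddle. The local change of variables there produces the nonstandard integrals $B_{k,l}=\int_{\R^2}e^{-(x^2+y^2)}\,x^ky^l/(x-iy)\,dx\,dy$, which are neither Gaussian moments nor trigonometric integrals, and whose parity property ($B_{k,l}=0$ unless $k+l$ is odd) is what converts the naive $a^{-\frac34(k+l+1)}$ ladder into the claimed series in $a^{-\frac32\nu}$. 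Your worry about reconciling ``two expansion parameters $a^{-3/2}$ and $a^{-1}$'' is a symptom of not having fixed the right scaling: after $\zeta\mapsto\sqrt a\,\zeta$ the quadratic $\t$-terms become $\t_i\zeta^2=O(1)$ and enter only through the amplitude, so there is a single large parameter $a^{3/2}$ and a single expansion scale. To repair the proposal you need to (i) make the contour-crossing residue the exact source of the sine kernel, and (ii) treat all four saddles $(\pm i,\pm i)$, distinguishing the singular diagonal pair (which yields $b_{k,l}B_{k,l}$ and hence $f^{\mathrm{S}_1}$) from the regular off-diagonal pair (which yields the oscillatory Gaussian-moment series).
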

\begin{remark} We note that in the context of asymptotic expansions the symbol \(\approx\) in \eqref{Exp:AirytoS} means, that for every integer \(N\geq 0\), we have
\begin{align}\frac{1}{\sqrt{a}} & \mathbb  K^{\rm Ai}_{\t_1/a,\t_2/a} \left(\frac{u}{\sqrt{a}} -a, \frac{v}{\sqrt{a}} -a \right) - \mathbb K^{\rm S_1}_{\t_1,\t_2}(u,v) \\
&- \sum_{\nu=1}^{N} \frac{2}{(2\pi i)^2 a^{\frac{3}{2} \nu}} \sum_{\substack{k,l \geq 0\\ k+l = 2\nu -1}} \Re\left\{b_{k,l} B_{k,l}\right\}\\
&-\sum_{\nu=1}^{N} \frac{2}{(2\pi i)^2 a^{\frac{3}{2} \nu}} \sum_{\substack{k,l \geq 0\\ k+l = \nu -1}} \Re\left\{c_{2k,2l}e^{i\frac{4}{3} a^{\frac{3}{2}}}\right\}\Gamma\left(k+\frac{1}{2}\right)\Gamma\left(l +\frac{1}{2}\right)\\
& = \mathcal{O}\left(a^{-\frac{3}{2}(N+1) }\right), 
\end{align}
as \(a \to +\infty\), uniformly in \(u,v,\t_1, \t_2\), each coming from a compact subset of \(\mathbb{R}\). This constitutes a \textit{complete} (or \textit{full}) asymptotic expansion as it holds up to any order $N$.
\end{remark}

\begin{remark} \label{remark1} According to relation \eqref{connS}, we can restate Theorem \ref{thm:Airytosine} in the following form directly in terms of the sine kernel \eqref{extended_sine}: for $u,v,\t_1, \t_2 \in\R$, we have
\begin{align}\label{trans:Airytosine}	\frac{\pi}{\sqrt{a}} & \mathbb  K^{\rm Ai}_{\frac{\pi^2 \t_1}{2a},\frac{\pi^2 \t_2}{2a}} \left(\frac{\pi u}{\sqrt{a}} -a, \frac{\pi v}{\sqrt{a}} -a \right)=\mathbb K^{\rm sine}_{\t_1,\t_2}(u,v) + \mathrm{fluc}(u,v; \t_1, \t_2; a) +\mathcal{O}\left(a^{-3}\right),
\end{align}
as \(a\to+\infty\), uniformly in $u,v,\t_1, \t_2 $ when restricted to compact subsets of \(\mathbb{R}\),  where the fluctuations are given by
\[ \mathrm{fluc}(u,v; \t_1, \t_2; a)= -\frac{1}{4} \frac{1}{a^{3/2}} e^{-\frac{\pi^2}{2}(\tau_1 -\tau_2)} \left( f_{\tau_1, \tau_2} (u,v) + \cos\left(\frac{4}{3} a^{3/2} -\pi(u+v)\right)\right),\]
with 
\[f_{\tau_1, \tau_2} (u,v) =  \pi (u+v)\cos\left(\pi(u-v)\right) -\pi^2(\tau_1 + \tau_2) \sin\left(\pi(u-v)\right).\]
Moreover, for the Airy to sine kernel transition there exists a complete asymptotic expansion of the form
\begin{align}\label{Exp:AirytoSine}\frac{\pi}{\sqrt{a}} & \mathbb  K^{\rm Ai}_{\frac{\pi^2 \t_1}{2a},\frac{\pi^2 \t_2}{2a}} \left(\frac{\pi u}{\sqrt{a}} -a, \frac{\pi v}{\sqrt{a}} -a \right) \approx \mathbb K^{\rm sine}_{\t_1,\t_2}(u,v) \\
&+ \sum_{\nu=1}^{\infty} \frac{2}{(2\pi i)^2 a^{\frac{3}{2} \nu}} \sum_{\substack{k,l \geq 0\\ k+l = 2\nu -1}} \Re\left\{\hat{b}_{k,l} B_{k,l}\right\}\\
&+\sum_{\nu=1}^{\infty} \frac{2}{(2\pi i)^2 a^{\frac{3}{2} \nu}} \sum_{\substack{k,l \geq 0\\ k+l = \nu -1}} \Re\left\{\hat{c}_{2k,2l}e^{i\frac{4}{3} a^{\frac{3}{2}}}\right\}\Gamma\left(k+\frac{1}{2}\right)\Gamma\left(l +\frac{1}{2}\right),  
\end{align}
as \(a \to +\infty\), uniformly in \(u,v,\t_1, \t_2\), each coming from a compact subset of \(\mathbb{R}\). The coefficients \(B_{k,l}\) are defined in \eqref{def:B}, and the coefficients \(\hat{b}_{k,l}\) are given by \(b_{k,l}\) defined in \eqref{varphi_exp} after the substitutions \(u \to \pi u\), \(v\to \pi v\), \(\t_1 \to \frac{\pi^2 \t_1}{2}\) and \(\t_2 \to \frac{\pi^2 \t_2}{2}\), and the coefficients \(\hat{c}_{k,l}\) are given by \(c_{k,l}\) defined in \eqref{varPhi_exp} after the same substitutions.
\end{remark}
Let us also remark that convergence from the classical, i.e.~non-extended Airy kernel, to the non-extended sine kernel has been studied in \cite{AGK22}.\\

We now turn to the transition from the extended Pearcey kernel to the extended sine kernel. To simplify the presentation, we introduce a second variant of the sine kernel by
\begin{align}\label{S_2-kernel}
	\mathbb K^{\rm S_2}_{\t_1,\t_2}(u,v)&:=\frac{1}{2\pi i} \int_{e^{-i\pi/3}}^{e^{i\pi/3}}  ~ e^{(\t_1 - \t_2)\w^2 +(u-v)\w} d\w\\
	&-1(\t_1>\t_2)\frac{1}{\sqrt{4\pi(\t_1-\t_2)}}\exp\lb-\frac{(u-v)^2}{4(\t_1-\t_2)}\rb,
	\end{align}
where $\t_1,\t_2\in\R$ are time and $u,v\in\R$ are spatial arguments. It is easily checked that the \(\mathrm{S_1}\)- and \(\mathrm{S_2}\)-variants are related by
\begin{equation}\label{connSS} \frac{2}{\sqrt{3}} e^{\frac{1}{3} (\t_1 -\t_2) -\frac{1}{\sqrt{3}} (u-v)} \mathbb K^{\rm S_2}_{\frac{4}{3} \t_1, \frac{4}{3} \t_2}\left(\frac{2}{\sqrt{3}} u-\frac{4}{3}\t_1, \frac{2}{\sqrt{3}}v-\frac{4}{3}\t_2\right) = \mathbb K^{\rm S_1}_{\t_1,\t_2}(u,v),
\end{equation}
for \( u,v,\t_1,\t_2 \in \mathbb{R}\), which means that the \(\mathrm{S_2}\)-variant is just a rescaled and gauged version of the \(\rm S_1\)-variant, and thus of the extended sine kernel in \eqref{extended_sine}.
\begin{thm}\label{thm:Pearceytosine} 
	We have for any $u,v,\t_1, \t_2 \in\R$
	\begin{equation}\label{trans:PearceytoS}	\frac{1}{a^{1/3}} \mathbb  K^{\rm P}_{2\t_1/a^{2/3},2\t_2/a^{2/3}}\left(\frac{u}{a^{1/3}} +a, \frac{v}{a^{1/3}} +a \right)=\mathbb K^{ \mathrm S_2}_{\t_1,\t_2}(u,v) +  \mathrm{fluc^{\mathrm S_2}}(u,v; \t_1, \t_2; a) +\mathcal{O}\left(a^{- 8/3}\right) \end{equation}
as \(a\to+\infty\), uniformly in $u,v,\t_1, \t_2 $ when restricted to compact subsets of \(\mathbb{R}\),  where the fluctuations are given by
\begin{align}\mathrm{fluc^{\mathrm{S}_2}}(u,v; \t_1, \t_2; a) =& -\frac{2}{3}\frac{\pi}{(2\pi i)^2} \frac{\exp\left\{\frac{u-v}{2}-\frac{\t_1 -\t_2}{2}\right\}}{a^{\frac{4}{3}}} \\
& \times \left\{ f_{\tau_1, \tau_2}^{\mathrm{S}_2} (u,v)- \frac{2}{\sqrt{3}}  \cos\left(\frac{3\sqrt{3}}{4} a^{\frac{4}{3}} -\frac{u+v}{2}-\frac{\t_1 +\t_2}{2}\right) \right\}
\end{align}
with 
\begin{align}f_{\tau_1, \tau_2}^{\mathrm{S}_2} (u,v) &= \bigg\{\left( \frac{u+v}{2} -(\t_1 + \t_2)\right)\sin\left(\frac{\sqrt{3}}{2} (u-v+\t_1 -\t_2)\right)\\
 &+ \sqrt{3}\left( \frac{u+v}{2}+ (\t_1 + \t_2)\right)\cos\left(\frac{\sqrt{3}}{2} (u-v+\t_1 -\t_2)\right)\bigg\}.
\end{align}
More generally, for the Pearcey to sine kernel transition there exists a complete asymptotic expansion of the form

\begin{align}\label{Exp:PearceytoS} \frac{1}{a^{1/3}}& \mathbb  K^{\rm P}_{2\t_1/a^{2/3},2\t_2/a^{2/3}}\left(\frac{u}{a^{1/3}} +a, \frac{v}{a^{1/3}} +a \right)  \approx \mathbb K^{\rm S_2}_{\t_1,\t_2}(u,v) \\
&+ \sum_{\nu=1}^{\infty} \frac{2}{(2\pi i)^2 a^{\frac{4}{3} \nu}} \sum_{\substack{k,l \geq 0\\ k+l = 2\nu -1}} \Re\left\{b_{k,l} B_{k,l}\right\}\\
& +\sum_{\nu=1}^{\infty} \frac{2}{(2\pi i)^2 a^{\frac{4}{3} \nu}} \sum_{\substack{k,l \geq 0\\ k+l = \nu -1}} \Re\left\{c_{2k,2l} e^{-i \frac{3\sqrt{3}}{4} a^{\frac{4}{3}}}\right\}\Gamma\left(k+\frac{1}{2}\right)\Gamma\left(l +\frac{1}{2}\right),
\end{align}
as \(a \to +\infty\), uniformly in \(u,v,\t_1, \t_2\), each coming from a compact subset of \(\mathbb{R}\). The coefficients \(b_{k,l}\) are defined in \eqref{varphi_exp_P},  the coefficients \(c_{k,l}\) are defined in \eqref{varPhi_exp_P}, and the coefficients \(B_{k,l}\) are defined in \eqref{def:B}.
\end{thm}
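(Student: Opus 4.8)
The plan is to carry out a steepest--descent analysis of the double contour integral in \eqref{eq:Pearceykernel}, running parallel to the argument used for the Airy case in Theorem~\ref{thm:Airytosine}, with the cubic phase there replaced by the quartic $g(\xi):=\xi^4/4+\xi$. First I would insert the rescalings of \eqref{trans:PearceytoS} into \eqref{eq:Pearceykernel} and substitute $\zeta=a^{1/3}Z$, $\w=a^{1/3}W$. The Jacobian $a^{2/3}$, the factor $(\zeta-\w)^{-1}=a^{-1/3}(Z-W)^{-1}$, and the overall prefactor $a^{-1/3}$ multiply to $1$, while the exponent collapses to
\[
a^{4/3}\bigl(g(W)-g(Z)\bigr)+\t_1W^2+uW-\t_2Z^2-vZ .
\]
A direct check shows that under the same substitutions and prefactor the Gaussian (non--intersection) term of $\mathbb K^{\rm P}$ turns \emph{exactly} into the Gaussian term of $\mathbb K^{\rm S_2}$ in \eqref{S_2-kernel}, so the whole matter reduces to the large-$a$ asymptotics of
\[
I(a):=\frac{1}{(2\pi i)^2}\int_{i\R}\!\!\int_{\Gamma^{\rm P}}\frac{e^{a^{4/3}(g(W)-g(Z))+\t_1W^2+uW-\t_2Z^2-vZ}}{Z-W}\,dW\,dZ ,
\]
which must be shown to equal $\tfrac{1}{2\pi i}\int_{e^{-i\pi/3}}^{e^{i\pi/3}}e^{(\t_1-\t_2)W^2+(u-v)W}dW$ plus the claimed expansion.

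The saddle points of $g$ solve $g'(\xi)=\xi^3+1=0$, i.e.\ $\xi_0:=e^{i\pi/3}$, $\overline{\xi_0}=e^{-i\pi/3}$, and $-1$; one computes $g(\xi_0)=\tfrac38+i\tfrac{3\sqrt3}{8}$, whence $\Re g(\xi_0)=\Re g(\overline{\xi_0})=\tfrac38>-\tfrac34=\Re g(-1)$ and $g(\overline{\xi_0})-g(\xi_0)=-\,i\,\tfrac{3\sqrt3}{4}$. I would deform $\Gamma^{\rm P}$ to a descent contour for $\Re g$ through $\xi_0$ and $\overline{\xi_0}$ (its residual piece through $-1$ contributing only $O(e^{-ca^{4/3}})$), and the line $i\R$ to a descent contour for the $Z$--integrand, i.e.\ an ascent contour for $\Re g$, also passing through $\xi_0$ and $\overline{\xi_0}$. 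On the resulting configuration $\Re\bigl(g(W)-g(Z)\bigr)\le 0$, with equality only on the four ordered saddle pairs. Pulling $i\R$ across $\Gamma^{\rm P}$ crosses the simple pole $Z=W$; with the correct orientation, the residue $e^{(\t_1-\t_2)W^2+(u-v)W}$ is collected over the arc of the deformed $W$--contour joining $\overline{\xi_0}$ to $\xi_0$, and since this residue is entire in $W$, Cauchy's theorem turns that arc integral into $\tfrac{1}{2\pi i}\int_{e^{-i\pi/3}}^{e^{i\pi/3}}e^{(\t_1-\t_2)W^2+(u-v)W}dW$. Together with the transformed Gaussian term this is exactly $\mathbb K^{\rm S_2}_{\t_1,\t_2}(u,v)$, the leading term of \eqref{Exp:PearceytoS}.

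It then remains to expand the residual double integral over the two non--crossing descent contours, which localises, up to $O(e^{-ca^{4/3}})$, near the four ordered saddle pairs. For the two \emph{diagonal} pairs, $(W,Z)$ near $(\xi_0,\xi_0)$ and near $(\overline{\xi_0},\overline{\xi_0})$, there is no oscillation ($g(W)-g(Z)\to0$); after rescaling $W-\xi_0=a^{-2/3}w$, $Z-\xi_0=a^{-2/3}z$ the Jacobian $a^{-4/3}$ combines with the pole $a^{2/3}(z-w)^{-1}$ to leave an overall $a^{-2/3}$, the quadratic part of the phase gives a Gaussian $e^{-\frac12|g''(\xi_0)|(\tilde w^2+\tilde z^2)}$ on the (infinitesimally separated) contours, and the symmetry $(\tilde w,\tilde z)\mapsto(-\tilde w,-\tilde z)$ annihilates the leading $a^{-2/3}$ term, so the contribution starts at $a^{-4/3}$. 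Expanding the slowly varying amplitude -- the product $e^{\t_1W^2+uW-\t_2Z^2-vZ}$ together with the super--quadratic part of $e^{a^{4/3}g}$ -- in the Taylor coefficients $b_{k,l}$ of \eqref{varphi_exp_P}, only odd total degrees $k+l=2\nu-1$ survive the symmetry, with the associated pole--regularised Gaussian double integrals being the universal constants $B_{k,l}$ of \eqref{def:B}; adding the $\xi_0$-- and $\overline{\xi_0}$--contributions produces the $2\Re\{b_{k,l}B_{k,l}\}$ sums. For the two \emph{off--diagonal} pairs, $(W,Z)$ near $(\overline{\xi_0},\xi_0)$ and near $(\xi_0,\overline{\xi_0})$, one picks up the oscillatory phase $e^{a^{4/3}(g(\overline{\xi_0})-g(\xi_0))}=e^{-i\frac{3\sqrt3}{4}a^{4/3}}$, whereas $Z-W=(\xi_0-\overline{\xi_0})+O(a^{-2/3})$ is now regular; after the same rescaling (Jacobian $a^{-4/3}$, no pole gain, leading order $a^{-4/3}$) the integral factorises into two one--dimensional Gaussian integrals, and since $\int_{\R}\tilde w^{\,m}e^{-c\tilde w^2}d\tilde w$ vanishes for odd $m$ and is a constant times $\Gamma(\tfrac{m+1}{2})$ for even $m$, only even powers $2k,2l$ with $k+l=\nu-1$ contribute, giving the $2\Re\{c_{2k,2l}e^{-i\frac{3\sqrt3}{4}a^{4/3}}\}\,\Gamma(k+\tfrac12)\Gamma(l+\tfrac12)$ sums, the $c_{k,l}$ being the amplitude coefficients of \eqref{varPhi_exp_P}. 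Watson--lemma--type estimates, uniform for $(u,v,\t_1,\t_2)$ in a fixed compact set, upgrade these into genuine asymptotic series valid to every order $N$ with remainder $O(a^{-\frac43(N+1)})$; truncating at $N=1$ and inserting $\xi_0^2=e^{2i\pi/3}$, $g''(\xi_0)=3e^{2i\pi/3}$ gives the closed form of $\mathrm{fluc}^{\mathrm S_2}$ together with the stated $f^{\mathrm S_2}_{\t_1,\t_2}$.

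The step I expect to be the main obstacle is the diagonal one, where the pole $Z=W$ sits precisely at the saddle points through which both descent contours must be routed: one has to keep the two contours infinitesimally separated there, check that this separation is compatible with the residue bookkeeping of the previous step, justify the cancellation of the $O(a^{-2/3})$ term, and pin down the regularised integrals $B_{k,l}$ exactly. A further technical point is to legitimise all the contour deformations -- in particular that $\Gamma^{\rm P}$ and $i\R$ may be carried to the descent system while the integrand stays absolutely integrable throughout the homotopy and exactly the above arc residue is produced -- and to control the tails away from the saddles uniformly in the parameters. Finally, I note that relation \eqref{connSS} links only the two sine--kernel variants, not the Pearcey and Airy kernels, so Theorem~\ref{thm:Pearceytosine} is not a formal consequence of Theorem~\ref{thm:Airytosine}; it does, however, proceed along the very same lines, with $a^{4/3}$ and the saddles $e^{\pm i\pi/3}$ of $\xi^4/4+\xi$ in the roles played by $a^{3/2}$ and the saddles $\pm i$ of $\xi^3/3+\xi$.
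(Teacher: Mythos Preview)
Your proposal is correct and follows essentially the same route as the paper: rescale by $a^{1/3}$, extract the $\mathbb K^{\rm S_2}$ integral as the residue picked up when pushing the $\zeta$-contour through $\Gamma^{\rm P}$, deform both contours onto steepest descent/ascent paths through $e^{\pm i\pi/3}$, and compute the four saddle-pair contributions, with the diagonal pairs producing the $\Re\{b_{k,l}B_{k,l}\}$ sums and the off-diagonal pairs the oscillatory $\Re\{c_{2k,2l}e^{-i\frac{3\sqrt3}{4}a^{4/3}}\}\Gamma(k+\tfrac12)\Gamma(l+\tfrac12)$ sums.

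The only noteworthy difference is in how the local analysis near a diagonal saddle is organised. You linearise, setting $W-\xi_0=a^{-2/3}w$, and then expand the super-quadratic remainder of $a^{4/3}g$ together with the amplitude. The paper instead introduces the conformal change $\zeta=g(x)$, $\omega=g(iy)$ with $f(g(x))-\tfrac34 e^{i\pi/3}=x^2$, so the phase is \emph{exactly} $-a^{4/3}(x^2+y^2)$ and all corrections are packaged into the analytic amplitude $\varphi(x,y)=\tfrac{x-iy}{g(x)-g(iy)}\,e^{(\cdots)}\,g'(x)\,ig'(iy)$; this makes the pole manifestly of the clean form $(x-iy)^{-1}$ and the $B_{k,l}$ appear directly, without a separate cancellation argument for the $a^{-2/3}$ term. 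Both organisations are standard and yield the same expansion; the paper's choice just disposes of precisely the obstacle you flagged (keeping the contours separated at the saddle and identifying the regularised $B_{k,l}$) in one stroke. Regarding $-1$: the paper never routes either contour through it, so no exponentially small ``residual piece'' needs to be estimated there---only the four saddle pairs $(e^{\pm i\pi/3},e^{\pm i\pi/3})$ enter.
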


\begin{remark} As in Remark \ref{remark1}, using relations \eqref{connSS} and \eqref{connS}, we can find suitable gauge factors and rescalings of the spatial and time variables, so that the leading terms in the asymptotic expression \eqref{trans:PearceytoS} and in the expansion \eqref{Exp:PearceytoS} become the extended sine kernel \(\mathbb K^{\rm sine}\) in \eqref{extended_sine}.
\end{remark}

Let us briefly comment on transitions between the three extended kernels other than the ones considered here.

It was shown in \cite{ACvM} with a PDE approach that (with a different parameterization)
\begin{align}
	&\lim_{a\to\infty}a\exp\lb{\frac1{27}a^{8/3}(\t_1-\t_2)+\frac{a^{4/3}}3(u-v)}\rb\\
	&\times\mathbb K^{\rm P}_{2a^{2/3}\t_1,2a^{2/3}\t_2}\lb a^{1/3}u+\frac23a^{5/3}\t_1-2\lb\frac a3\rb^3,a^{1/3}v+\frac23a^{5/3}\t_2-2\lb\frac a3\rb^3\rb\\
	&=\mathbb K^{\rm Ai}_{\t_1,\t_2}(u,v).\label{asymptoticrelation}
\end{align}
The transition from the non-extended Pearcey to Airy has also been studied for gap probabilities in \cite{BC}.
The transition from the extended Pearcey to the Airy kernel has been explained further in \cite{NV} by finding the \emph{Pearcey-to-Airy transition kernel with parameter $a$} given for $\t_1,\t_2,u,v\in\R$ and $a\geq0$ by  
\begin{align}\label{eq:transitionkernel}
	\mathbb K^a_{\t_1,\t_2}(u,v):&=\frac{1}{(2\pi i)^2} \int_{i\R} d\zeta \int_{\Gamma^{\rm P}} d\w~ \frac{\exp\lb-\frac{\zeta^4}{4}+\frac{a\z^3}{3}-\frac{\t_2\zeta^2}{2}-v\zeta +\frac{\w^4}{4}-\frac{a\w^3}{3}+\frac{\t_1\w^2}{2}+u\w\rb}{\zeta-\w}\\
	&-1(\t_1>\t_2)\frac{1}{\sqrt{2\pi(\t_1-\t_2)}}\exp\lb-\frac{(u-v)^2}{2(\t_1-\t_2)}\rb,
\end{align}
where the contour $\Gamma^{\rm P}$ is the X-shaped contour of the Pearcey kernel. This kernel interpolates between the Pearcey and the Airy kernel in the sense that $\mathbb K^0=\mathbb K^{\rm P}$ and for any $u,v,\t_1, \t_2 \in\R$
\begin{align}
	\lim_{a\to +\infty}a^{1/3}\mathbb K^a_{2a^{2/3}\t_1,2a^{2/3}\t_2}(a^{1/3}u,a^{1/3}v)=\mathbb K^{\rm Ai}_{\t_1,\t_2}(u,v). \label{interpol}
\end{align}
Shifting the integration variables $\zeta$ and $\w$ in \eqref{eq:transitionkernel} in such a way that the cubic terms are absorbed leads to an expression of the transition kernel in terms of the extended Pearcey kernel, and in turn to the relation \eqref{asymptoticrelation}. 

The transition kernel \eqref{eq:transitionkernel} arises asymptotically in NIBM when moving away from a cusp point at a certain speed in the Pearcey scaling (depending on the initial eigenvalues) \cite{NV}. We believe that such kernels interpolating between limiting kernels are easier to identify in the finite $n$ NIBM model rather than working with the limiting kernels directly. We may look at potential kernels interpolating between the extended Airy or Pearcey kernel, and the extended sine kernel, in the future.

Finally, let us comment on reversing the transitions. A transition from the extended Airy kernel to the extended Pearcey kernel has been provided in \cite{ADvM,AFvM} with the $r$-Airy kernel, interpolating from the extended Airy kernel ($r=0$) to the extended Pearcey kernel ($r\to\infty$). The $r$-Airy kernel arises as a limit in a variant of NIBM with Brownian bridges if $r$ eigenvalues are acting as outliers close to an edge point. As the number of outliers grows, a new bulk is formed leading to a cusp point with the first bulk, and hence the Pearcey kernel. It seems clear that to go from a translation-invariant extended sine kernel to the not translation-invariant extended Airy or Pearcey kernels, some modifications are needed to create an edge or a cusp out of a bulk. This is likely to be considered more naturally in models of a finite number of paths like NIBM.

\section{Proofs}

\vspace{2em}

\begin{proof}[Proof of Theorem \ref{thm:Airytosine}] For the heat kernel part in the definition of the Airy kernel \eqref{extended_Airy} we have, using the rescaling of the variables and time parameters given in \eqref{trans:AirytoS},
\[\frac{1}{\sqrt{a}}\frac{1}{\sqrt{4\pi(\t_1/a-\t_2/a)}}\exp\lb-\frac{(u/\sqrt{a}-v/\sqrt{a})^2}{4(\t_1/a-\t_2/a)}\rb=\frac{1}{\sqrt{4\pi(\t_1-\t_2)}}\exp\lb-\frac{(u-v)^2}{4(\t_1-\t_2)}\rb,\]
 which agrees with the heat kernel part in the definition of the \(\mathrm S_1\)-kernel \eqref{S_1-kernel}. Hence, we can focus on the double contour integral in \eqref{extended_Airy}, which we will denote by \(I_{\t_1, \t_2}(u,v)\). We have
\begin{align}&\frac{1}{\sqrt{a}}I_{\t_1/a, \t_2/a}\left(\frac{u}{\sqrt{a}}-a,\frac{v}{\sqrt{a}}-a \right) \\ 
&=\frac{1}{\sqrt{a}( 2\pi i)^2} \int_{\Sigma^{\rm Ai}} d\zeta \int_{\Gamma^{\rm Ai}} d\w ~ \frac{\exp\lb\frac{\zeta^3}3-\zeta \left(\frac{v}{\sqrt{a}}-a\right)-\frac{\t_2}{a}\zeta^2-\frac{\w^3}3+\w \left(\frac{u}{\sqrt{a}}-a\right)+\frac{\t_1}{a}\w^2\rb}{\zeta-\w}\\
&=\frac{1}{( 2\pi i)^2}  \int_{\Sigma^{\rm Ai}} d\zeta \int_{\Gamma^{\rm Ai}} d\w ~\frac{\exp\lb a^{3/2}\left(\frac{\zeta^3}{3}+\zeta\right) - a^{3/2}\left(\frac{\w^3}{3}+\w\right) -v\zeta -\t_2 \zeta^2 + u \w + \t_1 \w^2\rb}{\zeta-\w},
\end{align}
 where in the last step we have rescaled \(\zeta \) and \(\w\) by \(\sqrt{a}\). For the next step, we recall that the integration contour for \(\zeta\) consists of a straight line connecting \(\infty e^{- i\pi/3 }\) to the origin followed by another straight line connecting the origin to  \(\infty e^{+ i\pi/3 }\). Likewise, the integration contour for \(\w\) consists of a straight line connecting \(\infty e^{- i2\pi/3 }\) to the origin followed by another straight line connecting the origin to  \(\infty e^{+ i 2\pi/3 }\). We now shift the \(\zeta\) contour horizontally to the left and the \(\w\) contour horizontally to the right so that the contours meet at the points \(i\) and \(-i\). The new \(\zeta\) contour we denote by \(\tilde{\Sigma}^{\rm Ai}\) and the new \(\w\) contour we denote by \(\tilde{\Gamma}^{\rm Ai}\), see Figure \ref{Airy_contours_shifted}. Then we have

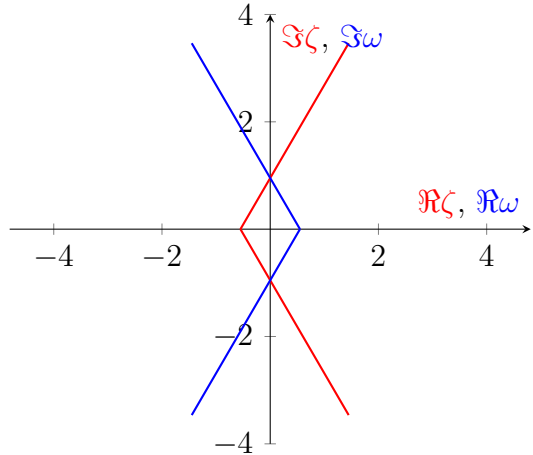
\begin{figure}[h]
\centering
\begin{tikzpicture}
    \begin{axis}[
        axis lines=middle,
        xlabel={$\textcolor{red}{\Re \zeta}$, $\textcolor{blue}{\Re \omega}$ },
        ylabel={$\textcolor{red}{\Im \zeta}$, $\textcolor{blue}{\Im \omega}$  },
        xmin=-4, xmax=4,
        ymin=-4, ymax=4,
        domain=-4:4,
        samples=100,
        axis equal,
        xtick={-4,-2,0,2,4},
        ytick={-4,-2,0,2,4}
    ]
    \addplot [thick, red, domain=0:4, samples=100] ({x*cos(60) - 0.55}, {x*sin(60)});
    \addplot [thick, red, domain=0:4, samples=100] ({x*cos(-60) - 0.55}, {x*sin(-60)});
    
    \addplot [thick, blue, domain=0:4, samples=100] ({x*cos(120) + 0.55}, {x*sin(120)});
    \addplot [thick, blue, domain=0:4, samples=100] ({x*cos(-120) + 0.55}, {x*sin(-120)});
    
    \end{axis}
\end{tikzpicture}

\caption{The contours \(\tilde{\Sigma}^{\rm Ai}\) and \(\tilde{\Gamma}^{\rm Ai}\) }
\label{Airy_contours_shifted}
\end{figure}

\begin{align}&\frac{1}{\sqrt{a}}I_{\t_1/a, \t_2/a}\left(\frac{u}{\sqrt{a}}-a,\frac{v}{\sqrt{a}}-a \right) \\ \label{int1}
&=\frac{1}{( 2\pi i)^2}  \int_{\tilde{\Sigma}^{\rm Ai}} d\zeta \int_{\tilde{\Gamma}^{\rm Ai}} d\w ~\frac{\exp\lb a^{3/2}\left(\frac{\zeta^3}{3}+\zeta\right) - a^{3/2}\left(\frac{\w^3}{3}+\w\right) -v\zeta -\t_2 \zeta^2 + u \w + \t_1 \w^2\rb}{\zeta-\w}\\
&\quad\quad +  \frac{1}{2\pi i} \int_{-i}^{i}  ~ e^{(\t_1 - \t_2)\w^2 +(u-v)\w} d\w,
\end{align}
where the single integral arises from an application of the residue theorem and Cauchy's integral theorem. Indeed, for any holomorphic function \(F(\zeta, \w)\) such that the following double integral exists
\[\frac{1}{( 2\pi i)^2}  \int_{\Sigma^{}} d\zeta \int_{\Gamma^{}} d\w ~ \frac{F(\zeta, \w)}{\zeta -\w  }, \]
 it can be shown using the residue theorem, that we have 
\begin{align}\label{intersect}\frac{1}{( 2\pi i)^2}  \int_{\Sigma^{}} d\zeta \int_{\Gamma^{}} d\w ~ \frac{F(\zeta, \w)}{\zeta -\w  } &= \frac{1}{( 2\pi i)^2}  \int_{\tilde{\Sigma}^{}} d\zeta \int_{\tilde{\Gamma}^{}} d\w ~  \frac{F(\zeta, \w)}{\zeta -\w  }\\
&\quad + \frac{1}{2\pi i} \int_{T_1}^{T_2} F(\w, \w) d\w,
\end{align}
where the contours \(\tilde{\Sigma}\) and \(\tilde{\Gamma}\) are modifications of \(\Sigma\) and \(\Gamma\) in the above described way, intersecting exactly at the points \(T_1 \) and \(T_2\). More precisely, pushing the contours into each other creates two integrable singularities at the points of intersection, so we have to take into account the contributions coming from these points. It follows from this representation, that the derivation of \eqref{trans:AirytoS} is based in an asymptotic evaluation for large \(a\to+\infty\) of the double contour integral in \eqref{int1}
\begin{align}
	&J_{\t_1, \t_2}^{a}(u,v)\\
	&:=\frac{1}{( 2\pi i)^2}  \int_{\tilde{\Sigma}^{\rm Ai}} d\zeta \int_{\tilde{\Gamma}^{\rm Ai}} d\w ~\frac{\exp\lb a^{3/2}\left(\frac{\zeta^3}{3}+\zeta\right) - a^{3/2}\left(\frac{\w^3}{3}+\w\right) -v\zeta -\t_2 \zeta^2 + u \w + \t_1 \w^2\rb}{\zeta-\w}.\label{def:J}
\end{align}
 We aim for a complete asymptotic expansion for \(J_{\t_1, \t_2}^{a}(u,v)\), which in turn gives a complete expansion for the rescaled Airy kernel in \eqref{trans:AirytoS} with the \(\mathrm{S}_1\)-kernel as its leading term. We introduce the auxiliary function \(f(\z) = \frac{\z^3}{3} +\z\), so that the integral \(J_{\t_1, \t_2}^{a}(u,v)\) is given by
\begin{align}\label{eq:J1} \frac{1}{( 2\pi i)^2}  \int_{\tilde{\Sigma}^{\rm Ai}} d\zeta \int_{\tilde{\Gamma}^{\rm Ai}} d\w ~ \exp\left\{ a^{3/2}\left( f(\z) - f(\w)\right)\right\}\frac{\exp\lb  -v\zeta -\t_2 \zeta^2 + u \w + \t_1 \w^2\rb}{\zeta-\w}.
\end{align}
Without difficulty we compute that there are four two-dimensional (simple) saddle points of the phase function \(f(\z) - f(\w)\), which are given by \(\left\{ (i,i), (i,-i), (-i,i), (-i,-i) \right\}\). We will study the contributions from each of these saddle points seperately, and construct the asymptotic expansion from these parts. From the perspective of asymptotic theory it is interesting to remark that  \(\left\{ (i,i), (-i,-i) \right\}\) and   \(\left\{ (i,-i), (-i,i) \right\}\) form two groups of saddle points that do not lead to the same type of asymptotic behavior. The reason for this is that due to the geometry of the integrand in \eqref{eq:J1}, at each of the saddle points in \(\left\{ (i,i), (-i,-i) \right\}\) the integrand exhibits an integrable singularity, which does not happen in the case of the remaining two saddle points. We have already arranged that the contours pass through the saddle points. However, in order to carry out a saddle point analysis, we now deform the contours \(\tilde{\Sigma}^{\rm Ai}\) and \(\tilde{\Gamma}^{\rm Ai}\) into paths of steepest descent. In general, the paths of steepest descent and steepest ascent are characterized by the property that the imaginary part of the phase function remains constant. As in our case the phase function has the specific form \(f(\z) - f(\w)\), we can make use of the following observation: the steepest descent path for \(f(\z)\) passing through the critical point \(i\) is the steepest ascent path for \(-f(\w)\) though \(i\), and the steepest descent path for \(-f(\w)\) is the steepest ascent path for \(f(\z)\). The two paths passing through the point \(i\) are given by the equation \(\Im\{\frac{1}{3} \z^3 + \z \}= \frac{2}{3}\), see Figure \ref{fig:contour_Airy1}. It can be easily checked that the steepest descent path for \(f(\z)\) is given by the branch that starts on the negative real line at \(-\infty\), passes through \(i\) and runs off to \(+\infty e^{i\pi/3}\), so that along this path, the function \(f(\z)-i \frac{2}{3}\) runs from \(-\infty\) to 0, and returns back to \(-\infty\). We denote this steepest descent path for \(f(\z)\) in the following by \(S_+\). The second contour in Figure \ref{fig:contour_Airy1} is the steepest descent path of \(-f(\w)\), it starts on the positive real axis at \(+\infty\), passes through \(i\) and runs off to  \(+\infty e^{i 2\pi/3}\). We will denote this path by \(T_+\). The situation is similar for the critical point \(-i\). In this case, the steepest descent and steepest ascent paths for \(f(\z)\) through \(-i\) are characterized by the equation  \(\Im\{\frac{1}{3} \z^3 + \z \}= -\frac{2}{3}\), see Figure \ref{fig:contour_Airy2}.The steepest descent path starts at \(+\infty e^{-i\pi/3}\), passes through \(-i\) and runs off to \(-\infty\), which we will denote by \(S_-\). Likewise, the second branch in Figure \ref{fig:contour_Airy2} is the steepest descent path for \(-f(\w)\), it starts at \(+\infty e^{-2\pi/3}\), passes through \(-i\) and runs off to \(+\infty\), and we will denote it by \(T_-\).

\begin{figure}[h]
    \centering
    \begin{tikzpicture}
        \begin{axis}[
            axis equal,
            xlabel={$\Re{\z}$},
            ylabel={$\Im{\z}$},
            ]
            \addplot [
                only marks,
                mark=*, 
                mark size=0.3pt,
                ] table {contour_data_Airy.txt};
        \end{axis}
    \end{tikzpicture}
    \caption{Contour plot of $\Im\{\frac{1}{3} \z^3 + \z \}= \frac{2}{3}$ in the upper complex plane}
    \label{fig:contour_Airy1}
\end{figure}
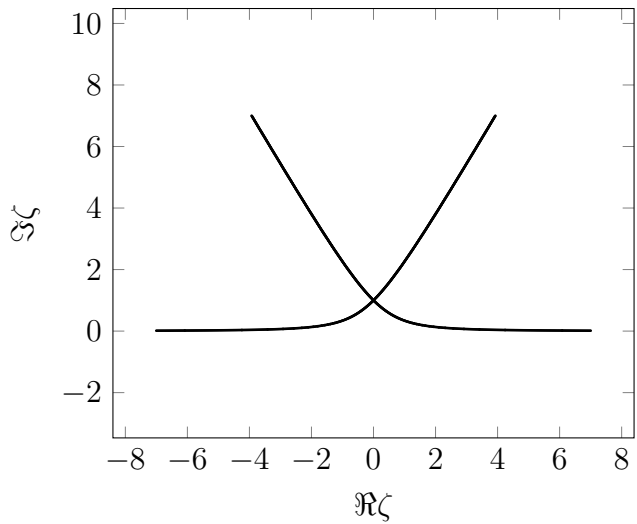

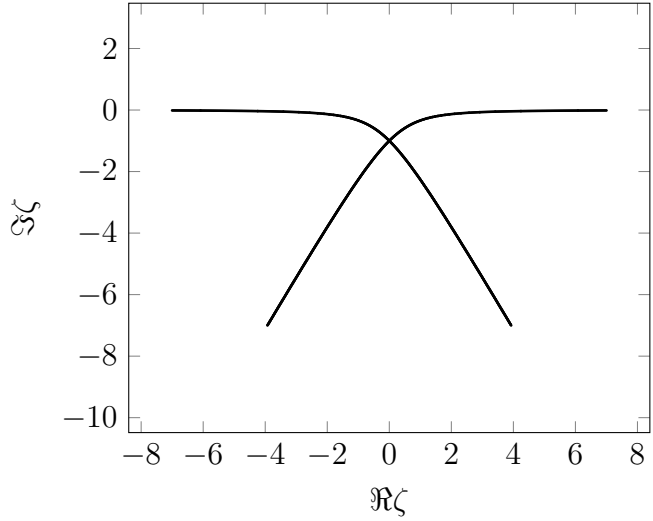
\begin{figure}[h]
    \centering
    \begin{tikzpicture}
        \begin{axis}[
            axis equal,
            xlabel={$\Re{\z}$},
            ylabel={$\Im{\z}$},
            ]
            \addplot [
                only marks,
                mark=*, 
                mark size=0.3pt,
                ] table {contour_data_Airy2.txt};
        \end{axis}
    \end{tikzpicture}
    \caption{Contour plot of $\Im\{\frac{1}{3} \z^3 + \z \}= -\frac{2}{3}$ in the lower complex plane}
    \label{fig:contour_Airy2}
\end{figure}

We now use Cauchy's theorem in order to deform the contours of integration in the representation \eqref{eq:J1} into the just found steepest descent contours: the contour \(\tilde{\Sigma}^{\rm Ai}\) (the red contour in Figure \ref{Airy_contours_shifted}) we deform into the concatenation of the contours \(S_-\) and \(S_+\), which we simply call \(\mathrm{S}\). Analogously, we deform \(\tilde{\Gamma}^{\rm Ai}\) (the blue contour in Figure \ref{Airy_contours_shifted}) into the concatenation of \(T_-\) and \(T_+\), and call it \(\mathrm{T}\). This gives us for \(J_{\t_1, \t_2}^{a}(u,v)\) the expression

\begin{align}\label{eq:J2} \frac{1}{( 2\pi i)^2}  \int_{\mathrm{S}} d\zeta \int_{\mathrm{T}} d\w ~ \exp\left\{ a^{3/2}\left( f(\z) - f(\w)\right)\right\}\frac{\exp\lb  -v\zeta -\t_2 \zeta^2 + u \w + \t_1 \w^2\rb}{\zeta-\w}.
\end{align}
In this setup, we now are able to study the asymptotic contributions from each of the four saddle points.
\subsection{Contribution from \((i,i)\)}

To find the contribution of the saddle point \((i,i)\), we only need to focus on a small neighborhood of the point \((i,i)\). More precisely, we consider the integral
\begin{align}\label{eq:S1} \frac{1}{( 2\pi i)^2}  \int_{\mathrm{S}(i)} d\zeta \int_{\mathrm{T}(i)} d\w ~ \exp\left\{ a^{3/2}\left( f(\z) - f(\w)\right)\right\}\frac{\exp\lb  -v\zeta -\t_2 \zeta^2 + u \w + \t_1 \w^2\rb}{\zeta-\w},
\end{align}
where \(\mathrm{S}(i)\) and \(\mathrm{T}(i)\) are the parts of the steepest descent paths \(\mathrm{S}\) and \(\mathrm{T}\) that intersect with a small complex neighborhood of \(i\). As the integration contour \(\mathrm{S}(i)\) is the steepest descent path for \(f(\z) = \frac{\z^3}{3}+ \z\) with climax at the point \(\z =i\), and \(\z=i\) is a simple critical point of \(f\), we can find a conformal analytic function \(g\) defined on some \(\epsilon\)-ball centered at the origin \(U_{\e}(0)\), mapping the interval \((-\e, \e) \subset \mathbb{R}\) bijectively onto \(\mathrm{S}(i)\) such that
\begin{equation}\label{algequ} f(g(x))-\frac{2}{3}i = -x^2, \quad -\e < x < \e.\end{equation}
More precisely, the function \(g\) is the branch of the algebraic function \(g\) defined by \eqref{algequ} with the series expansion
\begin{align}\label{g_exp} g(x)= \sum_{k=0}^{\infty} a_k x^k,
\end{align}
the coefficients of which can be obtained recursively from equation \eqref{algequ}. The first coefficients are given by \(a_0= i, a_1 = \sqrt{i}, a_2 = -\frac{1}{6}, a_3 = \frac{5}{72 \sqrt{i}}.\) It follows by replacing \(x\) by \(iy\) in \eqref{algequ} that we have
\begin{equation}\label{algequ2} f(g(iy))-\frac{2}{3}i = y^2, \quad -\e < y < \e,\end{equation}
which shows that \(y\mapsto g(iy)\) is a conformal map from a neighborhood of the origin to a neighborhood of the point \(i\), mapping the interval \((-\e, \e)\) onto \(\mathrm{T}(i)\).
In the integral \eqref{eq:S1}, we perform the change of variables \(\z = g(x)\) and \(\w = g(iy)\), so that we get
\begin{align}\label{eq:S2} \frac{1}{( 2\pi i)^2}  \int_{-\e}^{\e} dx \int_{-\e}^{\e} dy ~ \exp\left\{- a^{3/2}\left( x^2 +y^2\right)\right\}\frac{\varphi(x,y)}{x-iy},
\end{align}
where the function \(\varphi(x,y)\) is given by
\begin{align}\label{varphi} \varphi(x,y) = \frac{x-iy}{g(x)-g(iy)}\exp\left\{ -vg(x) -\t_2 (g(x))^2+u g(iy) +\t_1 (g(iy))^2 \right\}\frac{d}{dx}g(x) \frac{d}{dy}g(iy).
\end{align}
The function \(\varphi(x,y) \) is an analytic function of two complex variables in a neighborhood of the origin \((0,0)\), so we can expand it into a power series of the form
\begin{equation}\label{varphi_exp}\varphi(x,y) = \sum_{k,l \geq 0}b_{k,l} x^k y^l,\end{equation}
in which the coefficients \(b_{k,l}\) depend on the variables \(u,v\) and parameters \(\t_1, \t_2\). At this point, we have to choose the size of \(\e\) small enough, so that the power series in \eqref{varphi_exp} converges (absolutely) in some polydisc containing \((-\e, \e)^2\). We can choose this \(\e\) independently of the variables \(u,v\) and parameters \(\t_1, \t_2\) when we restrict these quantities to come from a compact set. After substituting \eqref{varphi_exp} in \eqref{eq:S2}, we obtain a complete asymptotic expansion for the contribution of \((i,i)\) by termwise integration. More precisely, we have

\begin{align} \frac{1}{( 2\pi i)^2} &  \int_{-\e}^{\e} dx \int_{-\e}^{\e} dy ~ \exp\left\{- a^{3/2}\left( x^2 +y^2\right)\right\}\frac{\varphi(x,y)}{x-iy}\\
\approx & \sum_{k,l \geq 0} \frac{b_{k,l}}{a^{\frac{3}{4} (k+l+1)}}  \frac{1}{( 2\pi i)^2}  \int_{-\e a^{\frac{3}{4}}}^{\e a^{\frac{3}{4}}} dx \int_{-\e a^{\frac{3}{4}}}^{\e a^{\frac{3}{4}}} dy ~ \exp\left\{- \left( x^2 +y^2\right)\right\}\frac{x^k y^l}{x-iy},\label{eq:S3}
\end{align}
where the symbol \(\approx\) in \eqref{eq:S3} means that, for every integer \(N \geq 0\), we have
\begin{align} \frac{1}{( 2\pi i)^2} &  \int_{-\e}^{\e} dx \int_{-\e}^{\e} dy ~ \exp\left\{- a^{3/2}\left( x^2 +y^2\right)\right\}\frac{\varphi(x,y)}{x-iy}\\
= & \sum_{k,l \geq 0 , ~ k+l \leq N} \frac{b_{k,l}}{a^{\frac{3}{4} (k+l+1)}}  \frac{1}{( 2\pi i)^2}  \int_{-\e a^{\frac{3}{4}}}^{\e a^{\frac{3}{4}}} dx \int_{-\e a^{\frac{3}{4}}}^{\e a^{\frac{3}{4}}} dy ~ \exp\left\{- \left( x^2 +y^2\right)\right\}\frac{x^k y^l}{x-iy}\\
+& \mathcal{O}\left(\frac{1}{a^{\frac{3}{4} (N+2)}}\right), \label{eq:SS3}
\end{align}
as \(a\to\infty\), uniformly in \(u,v,\t_1,\t_2\) on compacts. Indeed, we can estimate
\begin{align} & \left\vert  \frac{a^{\frac{3}{4}(N+2)}}{( 2\pi i)^2}  \int_{-\e a^{\frac{3}{4}}}^{\e a^{\frac{3}{4}}} dx \int_{-\e a^{\frac{3}{4}}}^{\e a^{\frac{3}{4}}} dy ~ \frac{\exp\left\{- \left( x^2 +y^2\right)\right\}}{x-iy} \sum_{k,l \geq 0 , ~ k+l \geq N+1} \frac{b_{k,l}}{a^{\frac{3}{4} (k+l+1)}} x^k y^l  \right\vert \\
& \leq \frac{1}{(2\pi)^2} \int_{-\e a^{\frac{3}{4}}}^{\e a^{\frac{3}{4}}} dx \int_{-\e a^{\frac{3}{4}}}^{\e a^{\frac{3}{4}}} dy ~ \frac{\exp\left\{- \left( x^2 +y^2\right)\right\}}{\sqrt{x^2 +y^2}} a^{\frac{3}{4}(N+1)} \sum_{k,l \geq 0 , ~ k+l \geq N+1} \vert b_{k,l}\vert \left(\frac{\vert x\vert}{a^{\frac{3}{4}}}\right)^k \left(\frac{\vert y\vert}{a^{\frac{3}{4}}}\right)^l,  \label{eq:SS4}
\end{align}
where the inner sum can be written as
\begin{align} & a^{\frac{3}{4}(N+1)} \sum_{k,l \geq 0 , ~ k+l \geq N+1} \vert b_{k,l}\vert \left(\frac{\vert x\vert}{a^{\frac{3}{4}}}\right)^k \left(\frac{\vert y\vert}{a^{\frac{3}{4}}}\right)^l \\
&= a^{\frac{3}{4}(N+1)} \sum_{k=0}^{N+1} \sum_{l\geq N+1 - k} \vert b_{k,l}\vert \left(\frac{\vert x\vert}{a^{\frac{3}{4}}}\right)^k \left(\frac{\vert y\vert}{a^{\frac{3}{4}}}\right)^l + a^{\frac{3}{4}(N+1)}  \sum_{k \geq N+2} \sum_{l \geq 0}  \vert b_{k,l}\vert \left(\frac{\vert x\vert}{a^{\frac{3}{4}}}\right)^k \left(\frac{\vert y\vert}{a^{\frac{3}{4}}}\right)^l.
\end{align}
Further estimating these sums gives for the first one
\begin{align}& a^{\frac{3}{4}(N+1)} \sum_{k=0}^{N+1} \sum_{l\geq N+1 - k} \vert b_{k,l}\vert \left(\frac{\vert x\vert}{a^{\frac{3}{4}}}\right)^k \left(\frac{\vert y\vert}{a^{\frac{3}{4}}}\right)^l = \sum_{k=0}^{N+1} \vert x \vert ^k \vert y\vert^{N+1-k}  \sum_{l\geq N+1 - k}  \vert b_{k,l}\vert  \left(\frac{\vert y\vert}{a^{\frac{3}{4}}}\right)^{l-(N+1-k)} \\
&\leq  \sum_{k=0}^{N+1} \vert x \vert ^k \vert y\vert^{N+1-k}  \sum_{l\geq N+1 - k}  \vert b_{k,l}\vert  \e^{l-(N+1-k)} \leq K  \sum_{k=0}^{N+1} \vert x \vert ^k \vert y\vert^{N+1-k} ,
\end{align}
for some positive constant \(K>0\) independent of \(u,v,\t_1, \t_2\), and for the second one
\begin{align}&a^{\frac{3}{4}(N+1)}  \sum_{k \geq N+2} \sum_{l \geq 0}  \vert b_{k,l}\vert \left(\frac{\vert x\vert}{a^{\frac{3}{4}}}\right)^k \left(\frac{\vert y\vert}{a^{\frac{3}{4}}}\right)^l= \frac{\vert x \vert ^{N+2}}{a^{\frac{3}{4}}}  \sum_{k \geq N+2} \sum_{l \geq 0}  \vert b_{k,l}\vert \left(\frac{\vert x\vert}{a^{\frac{3}{4}}}\right)^{k-(N+2)} \left(\frac{\vert y\vert}{a^{\frac{3}{4}}}\right)^l \\
&\leq \frac{\vert x \vert ^{N+2}}{a^{\frac{3}{4}}} \sum_{k \geq N+2} \sum_{l \geq 0}  \vert b_{k,l}\vert \e^{k-(N+2)}\e^l \leq K' \vert x \vert ^{N+2},
\end{align}
for some positive constant \(K'>0\) independent of \(u,v,\t_1, \t_2\). Using these estimates in \eqref{eq:SS4} and observing that all involved integrals converge absolutely as \(a\to\infty\), we obtain the statement in \eqref{eq:SS3}.
Moreover, it is not difficult to observe that we have for every \(k,l\geq 0\)

\begin{equation}\label{eq:S4}  \int_{-\e a^{\frac{3}{4}}}^{\e a^{\frac{3}{4}}} dx \int_{-\e a^{\frac{3}{4}}}^{\e a^{\frac{3}{4}}} dy ~ \exp\left\{- \left( x^2 +y^2\right)\right\}\frac{x^k y^l}{x-iy} = B_{k,l} + \mathcal{O}\left(\left(\e a^{\frac{3}{4}}\right)^{k+l-1} e^{-\e^2 a^{\frac{3}{2}}}\right),
\end{equation}
as \(a \to +\infty\), where we define
\begin{equation}\label{def:B} B_{k,l} =  \int_{-\infty}^{\infty} dx \int_{-\infty}^{\infty} dy ~ \exp\left\{- \left( x^2 +y^2\right)\right\}\frac{x^k y^l}{x-iy}, \quad k,l \geq 0.
\end{equation}
We remark that these coefficients \(B_{k,l}\) can be represented as 
\[B_{k,l} = \frac{1}{2} \Gamma\left(\frac{k+l+1}{2}\right) \int_{-\pi}^{\pi} e^{i\phi} \cos(\phi)^k  \sin(\phi)^l d\phi,\]
from which we could deduce a recursion, and that \(B_{k,l}= 0\) if and only if \(k \) and \(l\) are both even or both odd. The first coefficients are given by
\begin{equation} \label{Bs} B_{0,0} = 0, \quad B_{1,0} = \frac{\pi}{2},\quad B_{0,1}= i \frac{\pi}{2}.
\end{equation}
Substituting \eqref{eq:S4} into \eqref{eq:S3} shows now that the contribution from the saddle point \((i,i)\) to  \(J_{\t_1, \t_2}^{a}(u,v)\) is given by the asymptotic expansion

\begin{align}& \frac{1}{( 2\pi i)^2}  \int_{\mathrm{S}(i)} d\zeta \int_{\mathrm{T}(i)} d\w ~ \exp\left\{ a^{3/2}\left( f(\z) - f(\w)\right)\right\}\frac{\exp\lb  -v\zeta -\t_2 \zeta^2 + u \w + \t_1 \w^2\rb}{\zeta-\w}\\
&\approx  \sum_{k,l \geq 0} \frac{b_{k,l}}{a^{\frac{3}{4} (k+l+1)}}  \frac{B_{k,l}}{( 2\pi i)^2}   \label{eq:SS5}\\
&= \sum_{k,l \geq 0, ~ k+l ~\text{odd}} \frac{b_{k,l}}{a^{\frac{3}{4} (k+l+1)}}  \frac{B_{k,l}}{( 2\pi i)^2} = \sum_{\nu=1}^{\infty} \frac{1}{(2\pi i)^2 a^{\frac{3}{2} \nu}} \sum_{\substack{k,l \geq 0\\ k+l = 2\nu -1}} b_{k,l} B_{k,l} \label{eq:S5}
\end{align}
as \(a \to +\infty\), uniformly in \(u,v,\t_1, \t_2\), each coming from a compact subset of \(\mathbb{R}\). The equalities in \eqref{eq:S5} mean identity in the sense of asymptotic expansions, as the series are not convergent.

\subsection{Contribution from \((i, -i)\)} 

In this case we focus on a small neighborhood of the point \((i, -i)\). We consider the integral
\begin{align}\label{eq:S6} &\frac{1}{( 2\pi i)^2}  \int_{\mathrm{S}(i)} d\zeta \int_{\mathrm{T}(-i)} d\w ~ \exp\left\{ a^{3/2}\left( f(\z) - f(\w)\right)\right\}\frac{\exp\lb  -v\zeta -\t_2 \zeta^2 + u \w + \t_1 \w^2\rb}{\zeta-\w}\qquad\\
& = \frac{e^{i \frac{4}{3}a^{\frac{3}{2}}}}{( 2\pi i)^2}  \int_{\mathrm{S}(i)} d\zeta \int_{\mathrm{T}(-i)} d\w ~ \exp\left\{ a^{3/2}\left( f(\z)-\frac{2}{3}i\right) -a^{\frac{2}{3}}\left( f(\w)+\frac{2}{3}i\right)\right\}\\
&\qquad\qquad\times \frac{\exp\lb  -v\zeta -\t_2 \zeta^2 + u \w + \t_1 \w^2\rb}{\zeta-\w},
\end{align}
where \(\mathrm{S}(i)\) and \(\mathrm{T}(-i)\) are the parts of the steepest descent paths \(\mathrm{S}\) and \(\mathrm{T}\) that intersect with a small complex neighborhood of \(i\) and \(-i\), respectively. In order to construct the appropriate local transformations of the integration variables, we again make use of the function \(g\) defined in \eqref{algequ} satisfying
\[f(g(x))-\frac{2}{3}i = -x^2 , \quad -\e <x < \e,\]
which conformally maps a neighborhood of the origin to a neighborhood of \(i\). 
Moreover, we have 
\[f(-g(-y))+\frac{2}{3}i = y^2, \quad -\e < y < \e,\]
which shows that \(y \mapsto -g(-y)\) is a conformal map from a neighborhood of the origin to a neighborhood of the point \(-i\), mapping the interval \((-\e, \e)\) onto \(\mathrm{T}(-i)\). in the integral \eqref{eq:S6} we now perform the change of variables \(\z = g(x)\) and \(\w = -g(-y)\), so that we get
\begin{align}\label{eq:S7} \frac{e^{i\frac{4}{3} a^{\frac{3}{2}}}}{( 2\pi i)^2}  \int_{-\e}^{\e} dx \int_{-\e}^{\e} dy ~ \exp\left\{- a^{3/2}\left( x^2 +y^2\right)\right\}\varPhi(x,y),
\end{align}
where the function \(\varPhi(x,y)\) is given by
\begin{align}
	\varPhi(x,y) := \frac{\exp\left\{ -vg(x) -\t_2 (g(x))^2-u g(-y) +\t_1 (g(-y))^2 \right\}}{g(x)+g(-y)}\frac{d}{dx}g(x) \frac{d}{dy}\left(-g(-y)\right).\qquad  \label{varPhi} 
\end{align}
The function \(\varPhi(x,y) \) is an analytic function of two complex variables in a neighborhood of the origin \((0,0)\), so we can expand it into a power series of the form
\begin{equation}\label{varPhi_exp}\varPhi(x,y) = \sum_{k,l \geq 0}c_{k,l} x^k y^l,\end{equation}
in which the coefficients \(c_{k,l}\) depend on the variables \(u,v\) and parameters \(\t_1, \t_2\). Again here, we have to choose the size of \(\e\) small enough, so that the power series in \eqref{varPhi_exp} converges (absolutely) in some polydisc containing \((-\e, \e)^2\), independently of the variables \(u,v\) and parameters \(\t_1, \t_2\), each coming from a compact set.  After substituting \eqref{varPhi_exp} in \eqref{eq:S7} and after termwise integration, we obtain for the contribution of \((i,-i)\) the asymptotic expansion
\begin{align}\label{eq:S8}\sum_{k,l \geq 0} \frac{c_{k,l}}{a^{\frac{3}{4} (k+l+2)}} \frac{e^{i\frac{4}{3} a^{\frac{3}{2}}}}{( 2\pi i)^2}  \int_{-\e a^{\frac{3}{4}}}^{\e a^{\frac{3}{4}}} dx  \int_{-\e a^{\frac{3}{4}}}^{\e a^{\frac{3}{4}}} dy ~ \exp\left\{- \left( x^2 +y^2\right)\right\}x^k y^l,
\end{align}
as \(a \to +\infty\), uniformly in \(u,v,\t_1, \t_2\), each coming from a compact subset of \(\mathbb{R}\).
The double integral in \eqref{eq:S8} decouples, and we obtain for \(k,l \geq 0\)
\begin{align}\label{eq:S9}  \int_{-\e a^{\frac{3}{4}}}^{\e a^{\frac{3}{4}}} dx  \int_{-\e a^{\frac{3}{4}}}^{\e a^{\frac{3}{4}}} dy ~ \exp\left\{- a^{3/2}\left( x^2 +y^2\right)\right\}x^k y^l = C_k C_l +\mathcal{O}\left(e^{-\e^2 a^{\frac{3}{2}}}\right),
\end{align}
as \(a\to +\infty\), where the quantities \(C_k\) are the rescaled moments of a Gaussian distribution defined by
\begin{align}\label{def:C} C_k = \int_{-\infty}^{\infty} e^{-x^2} x^k dx = \frac{1+(-1)^k}{2} \Gamma\left(\frac{k+1}{2}\right), \quad k \geq 0.
\end{align}
Substituting \eqref{eq:S9} into \eqref{eq:S8} we obtain that the contribution of the saddle point \((i,-i)\) to \(J_{\t_1, \t_2}^{a}(u,v)\) is given by the asymptotic expansion
\begin{align}&\frac{1}{( 2\pi i)^2}  \int_{\mathrm{S}(i)} d\zeta \int_{\mathrm{T}(-i)} d\w ~ \exp\left\{ a^{3/2}\left( f(\z) - f(\w)\right)\right\}\frac{\exp\lb  -v\zeta -\t_2 \zeta^2 + u \w + \t_1 \w^2\rb}{\zeta-\w}  \\
&\approx\sum_{k,l \geq 0} \frac{c_{k,l} e^{i\frac{4}{3} a^{\frac{3}{2}}}}{a^{\frac{3}{4} (k+l+2)}} \frac{  C_k C_l }{( 2\pi i)^2} = \sum_{k,l \geq 0} \frac{c_{2k,2l} e^{i\frac{4}{3} a^{\frac{3}{2}}}}{a^{\frac{3}{2} (k+l+1)}} \frac{ \Gamma(k+\frac{1}{2})\Gamma(l+\frac{1}{2})  }{( 2\pi i)^2}  \\
&=   \sum_{\nu=1}^{\infty} \frac{1}{(2\pi i)^2 a^{\frac{3}{2} \nu}} \sum_{\substack{k,l \geq 0\\ k+l = \nu -1}} c_{2k,2l}e^{i\frac{4}{3} a^{\frac{3}{2}}}\Gamma\left(k+\frac{1}{2}\right)\Gamma\left(l +\frac{1}{2}\right),\label{eq:S10}
\end{align}
as \(a\to +\infty\), uniformly in \(u,v,\t_1, \t_2\), each coming from a compact subset of \(\mathbb{R}\).

\subsection{Contribution from \((-i, i)\)} Next, we focus on a small neighborhood of  \((-i,i)\).  We consider the integral
\begin{align} &\frac{1}{( 2\pi i)^2}  \int_{\mathrm{S}(-i)} d\zeta \int_{\mathrm{T}(i)} d\w ~ \exp\left\{ a^{3/2}\left( f(\z) - f(\w)\right)\right\}\frac{\exp\lb  -v\zeta -\t_2 \zeta^2 + u \w + \t_1 \w^2\rb}{\zeta-\w}\\
& = \frac{e^{-i \frac{4}{3}a^{\frac{3}{2}}}}{( 2\pi i)^2}  \int_{\mathrm{S}(-i)} d\zeta \int_{\mathrm{T}(i)} d\w ~ \exp\left\{ a^{3/2}\left( f(\z)+\frac{2}{3}i\right) -a^{\frac{2}{3}}\left( f(\w)-\frac{2}{3}i\right)\right\}\\
&\qquad\qquad\times \frac{\exp\lb  -v\zeta -\t_2 \zeta^2 + u \w + \t_1 \w^2\rb}{\zeta-\w}, \label{eq:S11}
\end{align}
where \(\mathrm{S}(-i)\) and \(\mathrm{T}(i)\) are the parts of the steepest descent paths \(\mathrm{S}\) and \(\mathrm{T}\) that intersect with a small complex neighborhood of \(-i\) and \(i\), respectively. As previously, we use the function \(g\) defined in \eqref{algequ} in order to define the local changes of variables. We have for \(\z = -g(-ix)\) and \(\w = g(iy)\)
\[f(\z) +\frac{2}{3}i = -x^2, \quad -\e < x < \e,\]
\[f(\w) -\frac{2}{3}i = y^2, \quad -\e < y < \e,\]
so that \eqref{eq:S11} becomes
\begin{align}\label{eq:S12} \frac{e^{-i\frac{4}{3} a^{\frac{3}{2}}}}{( 2\pi i)^2}  \int_{-\e}^{\e} dx \int_{-\e}^{\e} dy ~ \exp\left\{- a^{3/2}\left( x^2 +y^2\right)\right\}\varPhi^* (x,y),
\end{align}
where the function \(\varPhi^*(x,y)\) is given by
\begin{align}\label{varPhi*} \varPhi^*(x,y) = \frac{\exp\left\{ vg(-ix) -\t_2 (g(-ix))^2+u g(iy) +\t_1 (g(iy))^2 \right\}}{-g(-ix)-g(iy)}\frac{d}{dx}\left( -g(-ix) \right) \frac{d}{dy}g(iy).
\end{align}
We can verify without difficulty, using the fact \(\overline{g(\overline{x})} = -g(ix)\) for all \(x\in U_{\e}(0)\), that we have 
\[ \overline{\varPhi (\overline{x}, \overline{y})} = \varPhi^*(-x,-y),\]
where the function \(\varPhi(x,y)\) is defined in \eqref{varPhi}.
This is sufficient to link the coefficients of the series representation of \(\varPhi^*(x,y)\) to those of the series representation of \(\varPhi(x,y)\) by complex conjugation and negation, so that we have
\begin{align}\label{varPhi*_exp} \varPhi^* (x,y)= \sum_{k,l \geq 0} (-1)^{k+l}\overline{c_{k,l}} x^k y^l.
\end{align}
Substituting \eqref{varPhi*_exp} into \eqref{eq:S12} and integrating termwise, we obtain the following asymptotic expansion of the saddle point \((-i,i)\) to  \(J_{\t_1, \t_2}^{a}(u,v)\) 
\begin{align} &\frac{1}{( 2\pi i)^2}  \int_{\mathrm{S}(-i)} d\zeta \int_{\mathrm{T}(i)} d\w ~ \exp\left\{ a^{3/2}\left( f(\z) - f(\w)\right)\right\}\frac{\exp\lb  -v\zeta -\t_2 \zeta^2 + u \w + \t_1 \w^2\rb}{\zeta-\w}\\ 
&\approx \sum_{k,l \geq 0} \frac{(-1)^{k+l} \overline{c_{k,l}} e^{-i\frac{4}{3} a^{\frac{3}{2}}}}{a^{\frac{3}{4} (k+l+2)}} \frac{  C_k C_l }{( 2\pi i)^2} \label{eq:SS13} =\sum_{k,l \geq 0} \frac{ \overline{c_{2k,2l}} e^{-i\frac{4}{3} a^{\frac{3}{2}}}}{a^{\frac{3}{2} (k+l+1)}} \frac{ \Gamma(k+\frac{1}{2})\Gamma(l+\frac{1}{2}) }{( 2\pi i)^2}\\
&=  \sum_{\nu=1}^{\infty} \frac{1}{(2\pi i)^2 a^{\frac{3}{2} \nu}} \sum_{\substack{k,l \geq 0\\ k+l = \nu -1}} \overline{c_{2k,2l}}e^{i\frac{4}{3} a^{\frac{3}{2}}}\Gamma\left(k+\frac{1}{2}\right)\Gamma\left(l +\frac{1}{2}\right) ,\label{eq:S13}
\end{align}
as \(a\to +\infty\), uniformly in \(u,v,\t_1, \t_2\), each coming from a compact subset of \(\mathbb{R}\), where the coefficients \(C_k\) are given in \eqref{def:C}.

\subsection{Contribution from \((-i,- i)\)} For the last contribution, we look at a neighborhood of \((-i,-i )\).  We consider the integral
\begin{align} &\frac{1}{( 2\pi i)^2}  \int_{\mathrm{S}(-i)} d\zeta \int_{\mathrm{T}(-i)} d\w ~ \exp\left\{ a^{3/2}\left( f(\z) - f(\w)\right)\right\}\frac{\exp\lb  -v\zeta -\t_2 \zeta^2 + u \w + \t_1 \w^2\rb}{\zeta-\w},\qquad \label{eq:S14}
\end{align}
where \(\mathrm{S}(-i)\) and \(\mathrm{T}(-i)\) are the parts of the steepest descent paths \(\mathrm{S}\) and \(\mathrm{T}\) that intersect with a small complex neighborhood of \(-i\). 
Again, we use the function \(g\) defined in \eqref{algequ} in order to define the local changes of variables. We have for \(\z = -g(-ix)\) and \(\w = -g(-y)\)
\[f(\z) +\frac{2}{3}i = -x^2, \quad -\e < x < \e,\]
\[f(\w) +\frac{2}{3}i = y^2, \quad -\e < y < \e,\]
so that under this change of variables \eqref{eq:S14} becomes
\begin{align}\label{eq:S15} \frac{1}{( 2\pi i)^2}  \int_{-\e}^{\e} dx \int_{-\e}^{\e} dy ~ \exp\left\{- a^{3/2}\left( x^2 +y^2\right)\right\}\frac{\varphi^*(x,y)}{x+iy},
\end{align}
where the function \(\varphi^*(x,y)\) is given by
\begin{align}\label{varphi^*} \varphi^*(x,y) = &\frac{x+iy}{-g(-ix)+g(-y)}\exp\left\{ vg(-ix) -\t_2 (g(-ix))^2-u g(-y) +\t_1 (g(-y))^2 \right\}\\
&\times\frac{d}{dx}\left(-g(-ix)\right) \frac{d}{dy}\left(-g(-y)\right).
\end{align}
Using \(\overline{g(\overline{x})} = -g(ix)\) for all \(x\in U_{\e}(0)\), it follows easily that we have 
\[ \overline{\varphi (\overline{x}, \overline{y})} = -\varphi^*(-x,-y),\]
where the function \(\varphi(x,y)\) is defined in \eqref{varphi}. From this we infer that the coefficients of the series representation of \(\varphi^*(x,y)\) are linked to those of the series representation of \(\varphi(x,y)\) by conjugation and negation, so that we have
\begin{align}\label{varphi*_exp} \varphi^* (x,y)= \sum_{k,l \geq 0} (-1)^{k+l+1}\overline{b_{k,l}} x^k y^l.
\end{align}
Substituting \eqref{varphi*_exp} into \eqref{eq:S15}, and interchanging integration and summation, we obtain the following contribution of the saddle point \((-i,-i)\) to  \(J_{\t_1, \t_2}^{a}(u,v)\)
\begin{align}& \frac{1}{( 2\pi i)^2}  \int_{\mathrm{S}(-i)} d\zeta \int_{\mathrm{T}(-i)} d\w ~ \exp\left\{ a^{3/2}\left( f(\z) - f(\w)\right)\right\}\frac{\exp\lb  -v\zeta -\t_2 \zeta^2 + u \w + \t_1 \w^2\rb}{\zeta-\w}\\
&\approx \sum_{k,l \geq 0} \frac{(-1)^{k+l+1}\overline{b_{k,l}}}{a^{\frac{3}{4} (k+l+1)}}  \frac{\overline{B_{k,l}}}{( 2\pi i)^2} \label{eq:SS16}\\
&=  \sum_{k,l \geq 0, ~k+l ~\text{odd}} \frac{\overline{b_{k,l}}}{a^{\frac{3}{4} (k+l+1)}}  \frac{\overline{B_{k,l}}}{( 2\pi i)^2} =\sum_{\nu=1}^{\infty} \frac{1}{(2\pi i)^2 a^{\frac{3}{2} \nu}} \sum_{\substack{k,l \geq 0\\ k+l = 2\nu -1}} \overline{b_{k,l} B_{k,l}} ,\label{eq:S16}
\end{align}
as \(a \to +\infty\), uniformly in \(u,v,\t_1, \t_2\), each coming from a compact subset of \(\mathbb{R}\), where the coefficients \(B_{k,l}\) are defined in \eqref{def:B}.

\subsection{The complete expansion} We now are ready to collect all contributions and to arrive at a complete asymptotic expansion for the integral \(J_{\t_1, \t_2}^{a}(u,v)\) of \eqref{def:J} given by
\begin{align} \frac{1}{( 2\pi i)^2}  \int_{\mathrm{S}} d\zeta \int_{\mathrm{T}} d\w ~ \exp\left\{ a^{3/2}\left( f(\z) - f(\w)\right)\right\}\frac{\exp\lb  -v\zeta -\t_2 \zeta^2 + u \w + \t_1 \w^2\rb}{\zeta-\w}.
\end{align}
First, we fix compact subsets for \(u,v,\t_1, \t_2\), and observe that the contributions to the asymptotic behavior of \(J_{\t_1, \t_2}^{a}(u,v)\) coming from the parts of the contours away from the four saddle points are of order \(\mathcal{O}\left(e^{- \tilde{c} a^{\frac{3}{2}}}\right)\), as \(a \to +\infty\), where \(\tilde{c}\) is a positive constant, and this is valid uniformly with respect to \(u,v,\t_1, \t_2\). Adding the contributions from \eqref{eq:S5}, \eqref{eq:S10}, \eqref{eq:S13} and \eqref{eq:S16}, we find the asymptotic expansion
\begin{align} J_{\t_1, \t_2}^{a}(u,v) \approx & \sum_{\nu=1}^{\infty} \frac{2}{(2\pi i)^2 a^{\frac{3}{2} \nu}} \sum_{\substack{k,l \geq 0\\ k+l = 2\nu -1}} \Re\left\{b_{k,l} B_{k,l}\right\}\\
+&\sum_{\nu=1}^{\infty} \frac{2}{(2\pi i)^2 a^{\frac{3}{2} \nu}} \sum_{\substack{k,l \geq 0\\ k+l = \nu -1}} \Re\left\{c_{2k,2l}e^{i\frac{4}{3} a^{\frac{3}{2}}}\right\}\Gamma\left(k+\frac{1}{2}\right)\Gamma\left(l +\frac{1}{2}\right), \label{Complete_Airy}
\end{align}
as \(a \to +\infty\), uniformly in \(u,v,\t_1, \t_2\), each coming from a compact subset of \(\mathbb{R}\), for some constant \(c>0\). This leads to \eqref{Exp:AirytoS}. The statement in \eqref{trans:AirytoS} now follows by the evaluation of the first terms of this expansion. As we have \(B_{0,0}=0\), the coefficient \(b_{0,0}\) does not appear. Moreover, a computation using \eqref{varphi} gives
\begin{align} b_{1,0} = \frac{\partial \varphi}{\partial x} (x,y) \bigg\vert_{(x,y )= (0,0)}= e^{(u-v)i -(\t_1 -\t_2)} \left\{ v + i\left(2\t_2 -\frac{1}{6}\right)\right\},
\end{align}
and
\begin{align} b_{0,1} = \frac{\partial \varphi}{\partial y}(x,y) \bigg\rvert_{(x,y )= (0,0)} = e^{(u-v)i -(\t_1 -\t_2)} \left\{ -iu + 2\t_1 +\frac{1}{6}\right\}.
\end{align}
This gives us
\[ \sum_{k,l \geq 0,~ k+l = 1} \frac{2\Re\left\{  b_{k,l} B_{k,l}\right\}}{(2\pi i)^2  a^{\frac{3}{4} (k+l+1)}} = \frac{\pi}{(2\pi i)^2} \frac{ e^{-(\t_1 -\t_2)}}{a^{\frac{3}{2}}}\left\{\cos(u-v)(u+v)-2(\t_1+\t_2 )\sin(u-v)\right\},\]
and evaluating the first term corresponding to \(k=l=0\) of the second series in \eqref{Complete_Airy} gives
\[\frac{2 \Re\left\{c_{0,0} e^{i\frac{4}{3} a^{\frac{3}{2}}}\right\} \left(\Gamma(\frac{1}{2})\right)^2}{(2\pi i)^2 a^{\frac{3}{2}}} = \frac{\pi}{(2\pi i)^2} \frac{e^{-(\t_2 -\t_2)}}{a^{\frac{3}{2}}} \cos\left(\frac{4}{3} a^{\frac{3}{2}} - (u+v)\right).\]
This leads to statement \eqref{trans:AirytoS}.

\end{proof}

\begin{proof}[Proof of Theorem \ref{thm:Pearceytosine}] The proof follows the same method as the previous proof, but with some significant differences, which we will explain in the following. The heat kernel in \eqref{eq:Pearceykernel} becomes \(a^{1/3}\) times the heat kernel in \eqref{S_2-kernel} under the substitutions given in \eqref{trans:PearceytoS} \(\t_i \to 2\t_i/a^{2/3}\), \(u \to\frac{u}{a^{1/3}} +a \) and \(v \to \frac{v}{a^{1/3}} +a\). Hence, we can focus on the double contour integral in \eqref{eq:Pearceykernel}, which we again denote by \(I_{\t_1, \t_2}(u,v)\).  Again using the transformations of the variables given in \eqref{trans:PearceytoS} we have

\begin{align}&\frac{1}{a^{\frac{1}{3}}}I_{\frac{2\t_1}{a^{2/3}}, \frac{2\t_2}{a^{2/3}}}\left(\frac{u}{a^{\frac{1}{3}}}+a,\frac{v}{a^{\frac{1}{3}}}+a \right) \\ 
&=\frac{1}{a^{\frac{1}{3}}( 2\pi i)^2} \int_{i\mathbb{R}} d\zeta \int_{\Gamma^{\rm P}} d\w ~ \frac{\exp\lb -\frac{\zeta^4}4-\frac{\t_2}{a^{2/3}}\zeta^2 - \left(\frac{v}{a^{1/3}}+a\right) \zeta  +\frac{\w^4}4+  \frac{\t_1}{a^{2/3}} \w^2 +\left( \frac{u}{a^{1/3}} +a\right)\w \rb}{\zeta-\w}\\
&=\frac{1}{( 2\pi i)^2} \int_{i\mathbb{R}} d\zeta \int_{\Gamma^{\rm P}} d\w ~\frac{\exp\left(-a^{4/3}\left(\frac{\zeta^4}{4} + \zeta\right)+a^{4/3} \left(\frac{\w^4}{4} + \w\right) - v \zeta - \t_2 \zeta^2 + u\w + \t_1 \w^2\right)}{\zeta-\w},
\end{align}
where in the last step we have rescaled  \(\zeta \) and \(\w\) by  \(a^{1/3}\). For the next step, we recall that the integration contour for \(\zeta\) consists of a straight line connecting \(\infty e^{- i\pi/2 }\) and  \(\infty e^{+ i\pi/2 }\). Moreover, the integration contour for \(\w\) consists of four rays, two from the origin to $\pm\infty e^{-i\pi/4}$ and two from $\pm\infty e^{i\pi/4}$ to the origin, see Figure \ref{Pearcey_contours}. We now shift the \(\zeta\) contour horizontally to the right by \(\lambda >0\) so that the contours meet at the points \(T_1\) (in the lower right half-plane) and \(T_2\) (in the upper right half-plane). This gives us

\begin{align}&\frac{1}{a^{\frac{1}{3}}}I_{\frac{2\t_1}{a^{2/3}}, \frac{2\t_2}{a^{2/3}}}\left(\frac{u}{a^{\frac{1}{3}}}+a,\frac{v}{a^{\frac{1}{3}}}+a \right) \\ 
&=\frac{1}{( 2\pi i)^2} \int_{\lambda +  i\mathbb{R}} d\zeta \int_{\Gamma^{\rm P}} d\w ~\frac{\exp\left(-a^{4/3}\left(\frac{\zeta^4}{4} + \zeta\right)+a^{4/3} \left(\frac{\w^4}{4} + \w\right) - v \zeta - \t_2 \zeta^2 + u\w + \t_1 \w^2\right)}{\zeta-\w}\\
&\quad + \frac{1}{2\pi i} \int_{T_1}^{T_2}  ~ e^{(\t_1 - \t_2)\w^2 +(u-v)\w} d\w, \label{eq:P_1}
\end{align}
where the single integral arises from an application of the residue theorem and Cauchy's integral theorem using the same reasoning as in \eqref{intersect}. Introducing the auxilliary function \(f(\zeta) = \frac{\zeta^4}{4} + \zeta\), we find that its critical points are given by
\[\zeta_1 = e^{i\pi /3} = \frac{1}{2} + \frac{\sqrt{3}}{2}i , \quad \zeta_2 =  e^{-i\pi /3} = \frac{1}{2} - \frac{\sqrt{3}}{2}i, \quad \zeta_3 = -1,\]
with 
\[f\left(e^{i\pi /3}\right) = \frac{3}{4} e^{i\pi /3} = \frac{3}{8}  + \frac{3\sqrt{3}}{8}i, \quad f\left(e^{-i\pi /3}\right) = \frac{3}{4} e^{-i\pi /3} = \frac{3}{8}  - \frac{3\sqrt{3}}{8}i.\]
It follows that there are 9 saddle points for the phase function \(f(\zeta) - f(\w)\), given by
\[\left\{(\zeta_j, \zeta_k) ~\vert ~ j,k = 1,2,3 \right\}.\]
However, by considering the set of relevant steepest ascent/descent paths for the phase function \(f(\zeta) - f(\w)\) and deforming the contours of integration in the representation \eqref{eq:P_1} accordingly, it turns out that we only need to deal with the asymptotic contributions from the points 
\[\left\{(\zeta_j, \zeta_k) ~\vert ~ j,k = 1,2 \right\}= \left\{\left(e^{i\pi/3}, e^{i\pi/3} \right), \left(e^{i\pi/3}, e^{-i\pi/3} \right), \left(e^{-i\pi/3}, e^{i\pi/3} \right), \left(e^{-i\pi/3}, e^{-i\pi/3} \right)\right\}.\]
As we already observed in the case of the Airy-kernel, the saddle points with different entries give asymptotic contributions with a different type of behavior than the saddle points with equal entries, because the latter ones coincide with singularities of the integrand. First, we recall that the steepest ascent and descent paths for \(f(\zeta)\) passing though the critical point \(e^{-i\pi/3}\) are characterized by the equation \(\Im\{\frac{1}{4} \z^4 + \z \}= -\frac{3 \sqrt{3}}{8}\), see Figure \ref{fig:contour_Pearcey1}. It is easily checked that the steepest ascent path for \(f(\zeta)\) is given by the branch that starts at \(\infty e^{-i\pi/2}\), passes through \(e^{-i\pi/3}\), and runs off to \(+\infty\) along the positive real line. Along this path, which we will denote by \(S_{-}\), the function \(f(\zeta) - f(e^{-i\pi/3}) =f(\zeta)  - \frac{3}{4} e^{-i\pi /3}\) descents from \(+\infty\) to zero (attained at the point \(e^{-i\pi /3}\)), and returns to \(+\infty\) . The second path passing through \(e^{-i\pi/3}\), starting at \(\infty e^{-i 3\pi/4}\) and ending at \(\infty e^{-i \pi/4}\), is the steepest ascent path for \(-f(\w)\), which we denote by \(T_{-}\). 

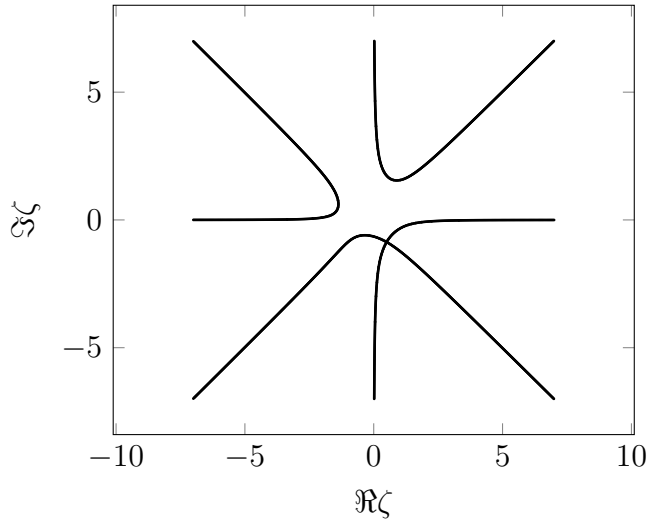
\begin{figure}[h]
    \centering
    \begin{tikzpicture}
        \begin{axis}[
            axis equal,
            xlabel={$\Re \z$},
            ylabel={$\Im \z$},
            ]
            \addplot [
                only marks,
                mark=*, 
                mark size=0.3pt,
                ] table {contour_data_Pearcey3.txt};
        \end{axis}
    \end{tikzpicture}
    \caption{Contour plot of $\Im\{\frac{1}{4} \z^4 + \z \}= -\frac{3 \sqrt{3}}{8}$ in the complex plane}
    \label{fig:contour_Pearcey1}
\end{figure}

Similarly, we can observe that the steepest descent and ascent paths of \(f(\zeta)\) through the point \(e^{i\pi/3}\) are characterized by the equation \(\Im\{\frac{1}{4} \z^4 + \z \}= \frac{3 \sqrt{3}}{8}\), see Figure \ref{fig:contour_Pearcey_2}. The steepest ascent path is given by the branch that starts at \(+\infty\), passes through \(e^{i\pi/3}\), and runs off to  \(\infty e^{i \pi/2}\), which we will denote by \(S_{+}\). The second contour through \(e^{i\pi/3}\) is the steepest ascent path for \(-f(\w)\), which we will denote by \(T_{+}\). We now use Cauchy's theorem in order to deform the contours of integration in the representation \eqref{eq:P_1} into the just found steepest ascent paths: the contour \(\lambda +  i\mathbb{R}\) we deform into the concatenation of the contours \(S_-\) and \(S_+\), and call it \(\mathrm{S}\). Analogously, we deform the part of \(\Gamma^{\rm P}\) in the lower half-plane into \(T_-\), and the part of \(\Gamma^{\rm P}\) in the upper half-plane into \(T_+\), and call the result \(\mathrm{T}\).

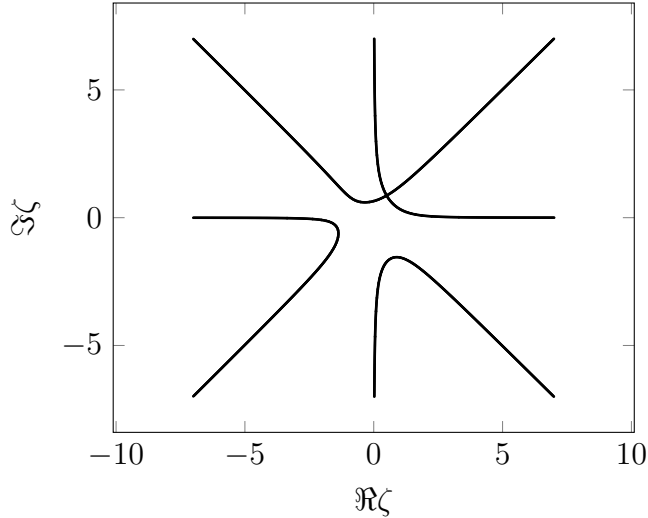
\begin{figure}[h]
    \centering
    \begin{tikzpicture}
        \begin{axis}[
            axis equal,
            xlabel={$\Re \z$},
            ylabel={$\Im \z$},
            ]
            \addplot [
                only marks,
                mark=*, 
                mark size=0.3pt,
                ] table {contour_data_Pearcey2.txt};
        \end{axis}
    \end{tikzpicture}
    \caption{Contour plot of $\Im\{\frac{1}{4} \z^4 + \z \}= \frac{3 \sqrt{3}}{8}$ in the complex plane}
    \label{fig:contour_Pearcey_2}
\end{figure}

This gives us in  \eqref{eq:P_1}
\begin{align}&\frac{1}{a^{\frac{1}{3}}}I_{\frac{2\t_1}{a^{2/3}}, \frac{2\t_2}{a^{2/3}}}\left(\frac{u}{a^{\frac{1}{3}}}+a,\frac{v}{a^{\frac{1}{3}}}+a \right) \\ 
&=\frac{1}{( 2\pi i)^2} \int_{\mathrm{S}} d\zeta \int_{\mathrm{T}} d\w ~\exp\left\{-a^{4/3}\left(f(\zeta)-f(\w)\right)\right\}\frac{\exp\lb  -v\zeta -\t_2 \zeta^2 + u \w + \t_1 \w^2\rb}{\zeta-\w}\\
&\quad + \frac{1}{2\pi i} \int_{e^{-i\pi/3}}^{e^{i\pi/3}}  ~ e^{(\t_1 - \t_2)\w^2 +(u-v)\w} d\w, \label{eq:P_2}
\end{align}
where we denote the double contour integral by \(J_{\t_1, \t_2}^{a}(u,v) \), hence
\begin{align}
	&J_{\t_1, \t_2}^{a}(u,v)\\
	&:=\frac{1}{( 2\pi i)^2} \int_{\mathrm{S}} d\zeta \int_{\mathrm{T}} d\w ~\exp\left\{-a^{4/3}\left(f(\zeta)-f(\w)\right)\right\}\frac{\exp\lb  -v\zeta -\t_2 \zeta^2 + u \w + \t_1 \w^2\rb}{\zeta-\w}.\qquad\label{def:J2}
\end{align}
In this setup, we now are able to study the asymptotic contributions from each of the four relevant saddle points in order to establish a complete expansion for \(J_{\t_1, \t_2}^{a}(u,v) \), as \(a\to\infty\).

\subsection{Contribution from \(\left(e^{i\pi/3}, e^{i\pi/3}\right)\)}
To find this contribution, we need to focus only on a small neighborhood of the point \(\left(e^{i\pi/3}, e^{i\pi/3}\right)\). To this end, we consider the integral

\begin{align}\label{eq:P_3}
\frac{1}{( 2\pi i)^2} \int\limits_{\mathrm{S}(e^{i\pi/3})} d\zeta \int\limits_{\mathrm{T}(e^{i\pi/3})} d\w ~\exp\left\{-a^{4/3}\left(f(\zeta)-f(\w)\right)\right\}\frac{\exp\lb  -v\zeta -\t_2 \zeta^2 + u \w + \t_1 \w^2\rb}{\zeta-\w},\qquad
\end{align}
where \(\mathrm{S}(e^{i\pi/3})\) and \(\mathrm{T}(e^{i\pi/3})\) are the parts of the steepest ascent paths \(\mathrm{S}\) and \(\mathrm{T}\) that intersect with a small complex neighborhood of \(e^{i\pi/3}\). As the integration contour \(\mathrm{S}(e^{i\pi/3})\) is the steepest ascent path for \(f(\z) = \frac{\z^4}{4}+ \z\) with climax at the point \(\z =e^{i\pi/3}\), and \(\z=e^{i\pi/3}\) is a simple critical point of \(f\), we can find a conformal analytic function \(g\) defined on some \(\epsilon\)-ball \(U_{\e}(0)\), mapping the interval \((-\e, \e) \subset \mathbb{R}\) bijectively onto \(\mathrm{S}(e^{i\pi/3})\) such that
\begin{equation}\label{algequ3} f(g(x))-\frac{3}{4}e^{i\pi/3} = x^2, \quad -\e < x < \e.\end{equation}
More precisely, the function \(g\) is the branch of the algebraic function \(g\) defined by \eqref{algequ3} with the series expansion
\begin{align}\label{g_exp2} g(x)= \sum_{k=0}^{\infty} a_k x^k,
\end{align}
the coefficients of which can be obtained recursively from equation \eqref{algequ3}. The first coefficients are given by \(a_0= e^{i\pi/3}, a_1 = \sqrt{\frac{2}{3}} e^{i2\pi/3}, a_2 = \frac{2}{9}, a_3 = -\frac{5 }{27} \sqrt{\frac{2}{3}} e^{i\pi/3}.\) It follows by replacing \(x\) by \(iy\) in \eqref{algequ3} that we have
\begin{equation}\label{algequ4} f(g(iy))-\frac{3}{4}e^{i\pi/3} = -y^2, \quad -\e < y < \e,\end{equation}
which shows that \(y\mapsto g(iy)\) is a conformal map from a neighborhood of the origin to a neighborhood of the point \(e^{i\pi/3}\), mapping the interval \((-\e, \e)\) onto \(\mathrm{T}(e^{i\pi/3})\). In the integral \eqref{eq:P_3}, we perform the change of variables \(\z = g(x)\) and \(\w = g(iy)\), so that we get
\begin{align}\label{eq:P_4} \frac{1}{( 2\pi i)^2}  \int_{-\e}^{\e} dx \int_{-\e}^{\e} dy ~ \exp\left\{- a^{4/3}\left( x^2 +y^2\right)\right\}\frac{\varphi(x,y)}{x-iy},
\end{align}
where the function \(\varphi(x,y)\) is given by
\begin{align}\label{varphi_P} \varphi(x,y) := \frac{x-iy}{g(x)-g(iy)}\exp\left\{ -vg(x) -\t_2 (g(x))^2+u g(iy) +\t_1 (g(iy))^2 \right\}\frac{d}{dx}g(x) \frac{d}{dy}g(iy).
\end{align}
The function \(\varphi(x,y) \) is an analytic function of two complex variables in a neighborhood of the origin \((0,0)\), so we can expand it into a power series of the form
\begin{equation}\label{varphi_exp_P}\varphi(x,y) = \sum_{k,l \geq 0}b_{k,l} x^k y^l,\end{equation}
in which the coefficients \(b_{k,l}\) depend on the variables \(u,v\) and parameters \(\t_1, \t_2\). At this point, we have to choose the size of \(\e\) small enough, so that the power series in \eqref{varphi_exp_P} converges (absolutely) in some polydisc containing \((-\e, \e)^2\). We can choose this \(\e\) independently of the variables \(u,v\) and parameters \(\t_1, \t_2\) when we restrict these quantities to come from a compact set. After substituting \eqref{varphi_exp_P} in \eqref{eq:P_4}, using the same reasoning leading to \eqref{eq:S3} and \eqref{eq:SS3}, we obtain the following asymptotic expansion for the contribution of \((e^{i\pi/3}, e^{i\pi/3})\)
\begin{align}& \frac{1}{( 2\pi i)^2}  \int_{-\e}^{\e} dx \int_{-\e}^{\e} dy ~ \exp\left\{- a^{4/3}\left( x^2 +y^2\right)\right\}\frac{\varphi(x,y)}{x-iy}\\
&\approx \sum_{k,l \geq 0} \frac{b_{k,l}}{a^{\frac{2}{3} (k+l+1)}}  \frac{1}{( 2\pi i)^2}  \int_{-\e a^{\frac{2}{3}}}^{\e a^{\frac{2}{3}}} dx \int_{-\e a^{\frac{2}{3}}}^{\e a^{\frac{2}{3}}} dy ~ \exp\left\{- \left( x^2 +y^2\right)\right\}\frac{x^k y^l}{x-iy},\label{eq:P_5}
\end{align}
as \(a \to +\infty\), uniformly in \(u,v,\t_1, \t_2\), each coming from a compact subset of \(\mathbb{R}\).
As in the Airy case we have for every \(k,l\geq 0\)
\begin{equation}\label{eq:P_6}  \int_{-\e a^{\frac{2}{3}}}^{\e a^{\frac{2}{3}}} dx \int_{-\e a^{\frac{2}{3}}}^{\e a^{\frac{2}{3}}} dy ~ \exp\left\{- \left( x^2 +y^2\right)\right\}\frac{x^k y^l}{x-iy} = B_{k,l} + \mathcal{O}\left(\left(\e a^{\frac{2}{3}}\right)^{k+l-1} e^{-\e^2 a^{\frac{4}{3}}}\right),
\end{equation}
as \(a \to +\infty\), where the constants \(B_{k,l}\) are defined in \eqref{def:B} as
\begin{equation}B_{k,l} =  \int_{-\infty}^{\infty} dx \int_{-\infty}^{\infty} dy ~ \exp\left\{- \left( x^2 +y^2\right)\right\}\frac{x^k y^l}{x-iy}, \quad k,l \geq 0.
\end{equation}
Substituting \eqref{eq:P_6} into \eqref{eq:P_5} shows now that the contribution from the saddle point \((e^{i\pi/3}, e^{i\pi/3})\) to  \(J_{\t_1, \t_2}^{a}(u,v)\) is given by
\begin{align}& \frac{1}{( 2\pi i)^2}  \int_{\mathrm{S}(e^{i\pi/3})} d\zeta \int_{\mathrm{T}(e^{i\pi/3})} d\w ~ \exp\left\{ -a^{4/3}\left( f(\z) - f(\w)\right)\right\}\frac{\exp\lb  -v\zeta -\t_2 \zeta^2 + u \w + \t_1 \w^2\rb}{\zeta-\w}\\
&\approx  \sum_{k,l \geq 0} \frac{b_{k,l}}{a^{\frac{2}{3} (k+l+1)}}  \frac{B_{k,l}}{( 2\pi i)^2} \\
&=  \sum_{k,l \geq 0, ~k+l ~\text{odd}} \frac{b_{k,l}}{a^{\frac{2}{3} (k+l+1)}}  \frac{B_{k,l}}{( 2\pi i)^2} = \sum_{\nu = 1}^{\infty} \frac{1}{ (2\pi i )^2 a^{\frac{4}{3} \nu}} \sum_{\substack{k,l \geq 0 \\ k+l = 2\nu -1  }} b_{k,l}B_{k,l}  \label{eq:P_7}
\end{align}
as \(a \to +\infty\), uniformly in \(u,v,\t_1, \t_2\), each coming from a compact subset of \(\mathbb{R}\).

\subsection{Contribution from \(\left(e^{i\pi/3}, e^{-i\pi/3}\right)\)}
To find this contribution, we are going to focus on a small neighborhood of the point \(\left(e^{i\pi/3}, e^{-i\pi/3}\right)\). To this end, we consider the integral

\begin{align}
&\frac{1}{( 2\pi i)^2} \int\limits_{\mathrm{S}(e^{i\pi/3})} d\zeta \int\limits_{\mathrm{T}(e^{-i\pi/3})} d\w ~\exp\left\{-a^{4/3}\left(f(\zeta)-f(\w)\right)\right\}\frac{\exp\lb  -v\zeta -\t_2 \zeta^2 + u \w + \t_1 \w^2\rb}{\zeta-\w}\\
=&\frac{e^{-i \frac{3\sqrt{3}}{4} a^{4/3}}}{( 2\pi i)^2} \int\limits_{\mathrm{S}(e^{i\pi/3})} d\zeta \int\limits_{\mathrm{T}(e^{-i\pi/3})} d\w ~\exp\left\{-a^{4/3}\left(f(\zeta)-\frac{3}{4} e^{i\pi/3}\right)+a^{4/3}\left( f(\w) - \frac{3}{4}e^{-i\pi/3} \right)\right\} \\
&\qquad \qquad\qquad \qquad \qquad \times  \frac{\exp\lb  -v\zeta -\t_2 \zeta^2 + u \w + \t_1 \w^2\rb}{\zeta-\w}, \label{eq:P_8}
\end{align}
where \(\mathrm{S}(e^{i\pi/3})\) and \(\mathrm{T}(e^{-i\pi/3})\) are the parts of the steepest ascent paths \(\mathrm{S}\) and \(\mathrm{T}\) that intersect with a small complex neighborhood of \(e^{i\pi/3}\) and \(e^{-i\pi/3}\), respectively. In order to construct the appropriate local transformations of the integration variables, we again make use of the function \(g\) defined in \eqref{algequ3} satisfying
\[f(g(x))-\frac{3}{4}e^{i\pi/3} = x^2 , \quad -\e <x < \e,\]
which has the series representation 
\[g(x)= \sum_{k=0}^{\infty} a_k x^k\]
and conformally maps a neighborhood of the origin onto a neighborhood of \(e^{i\pi/3}\). 
Moreover, defining 
\begin{equation}\label{def:gover}\overline{g} (x) :=\sum_{k=0}^{\infty} \overline{a_k} x^k, 
\end{equation} 
we see that \(\overline{g}\) satisfies the algebraic equation
\[f(\overline{g}(x))-\frac{3}{4}e^{-i\pi/3} = x^2, \quad -\e <x < \e,\]
from which it follows by substituting \(x= iy\), that \(y \to \overline{g}(iy)\) maps a neighborhood of the origin conformally onto a neighborhood of \(e^{-i\pi/3}\), mapping the interval \((-\e, \e)\) bijectively onto \(\mathrm{T}(e^{-i\pi/3})\), and that
\[f(\overline{g}(iy))-\frac{3}{4}e^{-i\pi/3} = -y^2, \quad -\e <x < \e.\]
 In the integral \eqref{eq:P_8}, we perform the change of variables \(\z = g(x)\) and \(\w = \overline{g}(iy)\), so that we get
\begin{align}\label{eq:P_9} \frac{e^{-i \frac{3\sqrt{3}}{4} a^{4/3}}}{( 2\pi i)^2}  \int_{-\e}^{\e} dx \int_{-\e}^{\e} dy ~ \exp\left\{- a^{4/3}\left( x^2 +y^2\right)\right\}\varPhi(x,y),
\end{align}
where the function \(\varPhi(x,y)\) is given by
\begin{align}\label{varPhi_P} \varPhi(x,y) := \frac{\exp\left\{ -vg(x) -\t_2 (g(x))^2+u \overline{g}(iy) +\t_1 (\overline{g}(iy))^2 \right\}}{g(x)-\overline{g}(iy)}\frac{d}{dx}g(x) \frac{d}{dy}\left(\overline{g}(iy)\right).
\end{align}
The function \(\varPhi(x,y) \) is an analytic function of two complex variables in a neighborhood of the origin \((0,0)\), so we can expand it into a power series of the form
\begin{equation}\label{varPhi_exp_P}\varPhi(x,y) = \sum_{k,l \geq 0}c_{k,l} x^k y^l,\end{equation}
in which the coefficients \(c_{k,l}\) depend on the variables \(u,v\) and parameters \(\t_1, \t_2\). Again, here we have to choose the size of \(\e\) small enough, so that the power series in \eqref{varPhi_exp_P} converges (absolutely) in some polydisc containing \((-\e, \e)^2\), independently of the variables \(u,v\) and parameters \(\t_1, \t_2\), each coming from a compact set.  After substituting \eqref{varPhi_exp_P} in \eqref{eq:P_9}, and by integrating termwise, we obtain for the contribution of \((e^{i\pi/3}, e^{-i\pi/3})\) the following asymptotic expansion
\begin{align}\label{eq:P_10}\sum_{k,l \geq 0}\frac{ c_{k,l}}{a^{\frac{2}{3} (k+l+2)}} \frac{e^{-i \frac{3\sqrt{3}}{4} a^{4/3}}}{( 2\pi i)^2}  \int_{-\e a^{\frac{2}{3}}}^{\e a^{\frac{2}{3}}} dx \int_{-\e a^{\frac{2}{3}}}^{\e a^{\frac{2}{3}}} dy ~ \exp\left\{-\left( x^2 +y^2\right)\right\}x^k y^l.
\end{align}
As in the Airy case, the double integrals in \eqref{eq:P_10} decouple, and we obtain for \(k,l \geq 0\)
\begin{align}\label{eq:P_11}  \int_{-\e a^{\frac{2}{3}}}^{\e a^{\frac{2}{3}}} dx \int_{-\e a^{\frac{2}{3}}}^{\e a^{\frac{2}{3}}} dy ~ \exp\left\{-\left( x^2 +y^2\right)\right\}x^k y^ll = C_k C_l +\mathcal{O}\left(e^{-\e^2 a^{\frac{4}{3}}}\right),
\end{align}
as \(a\to +\infty\), where we recall from \eqref{def:C} that
\begin{align} C_k = \int_{-\infty}^{\infty} e^{-x^2} x^k dx = \frac{1+(-1)^k}{2} \Gamma\left(\frac{k+1}{2}\right), \quad k \geq 0.
\end{align}
Substituting \eqref{eq:P_11} into \eqref{eq:P_10} we obtain for the contribution of the saddle point \((e^{i\pi/3}, e^{-i\pi/3})\) to \(J_{\t_1, \t_2}^{a}(u,v)\)
\begin{align}&\frac{1}{( 2\pi i)^2} \int\limits_{\mathrm{S}(e^{i\pi/3})} d\zeta \int\limits_{\mathrm{T}(e^{-i\pi/3})} d\w ~\exp\left\{-a^{4/3}\left(f(\zeta)-f(\w)\right)\right\}\frac{\exp\lb  -v\zeta -\t_2 \zeta^2 + u \w + \t_1 \w^2\rb}{\zeta-\w}\\
 & \approx\sum_{k,l \geq 0} \frac{c_{k,l} e^{-i\frac{3\sqrt{3}}{4} a^{\frac{4}{3}}}}{a^{\frac{2}{3} (k+l+2)}} \frac{  C_k C_l }{( 2\pi i)^2} =\sum_{k,l \geq 0} \frac{c_{2k,2l} e^{-i\frac{3\sqrt{3}}{4} a^{\frac{4}{3}}}}{a^{\frac{4}{3} (k+l+1)}} \frac{  \Gamma(k+\frac{1}{2}) \Gamma(l+\frac{1}{2}) }{( 2\pi i)^2}     \\
& = \sum_{\nu =1}^{\infty} \frac{1}{(2\pi i)^2 a^{\frac{4}{3} \nu}} \sum_{\substack{k,l\geq 0 \\ k+l = \nu -1}} c_{2k, 2l} e^{- \frac{3\sqrt{3}}{4} a^{\frac{4}{3}}}  \Gamma\left(k+\frac{1}{2}\right) \Gamma\left(l+\frac{1}{2}\right)\label{eq:P_12}
\end{align}
as \(a\to +\infty\), uniformly in \(u,v,\t_1, \t_2\), each coming from a compact subset of \(\mathbb{R}\).

\subsection{Contribution from \(\left(e^{-i\pi/3}, e^{i\pi/3}\right)\)}
We focus on a small neighborhood of the point \(\left(e^{-i\pi/3}, e^{i\pi/3}\right)\). To this end, we consider the integral 

\begin{align}
&\frac{1}{( 2\pi i)^2} \int\limits_{\mathrm{S}(e^{-i\pi/3})} d\zeta \int\limits_{\mathrm{T}(e^{i\pi/3})} d\w ~\exp\left\{-a^{4/3}\left(f(\zeta)-f(\w)\right)\right\}\frac{\exp\lb  -v\zeta -\t_2 \zeta^2 + u \w + \t_1 \w^2\rb}{\zeta-\w}\\
=&\frac{e^{i \frac{3\sqrt{3}}{4} a^{4/3}}}{( 2\pi i)^2} \int\limits_{\mathrm{S}(e^{-i\pi/3})} d\zeta \int\limits_{\mathrm{T}(e^{i\pi/3})} d\w ~\exp\left\{-a^{4/3}\left(f(\zeta)-\frac{3}{4} e^{-i\pi/3}\right)+a^{4/3}\left( f(\w) - \frac{3}{4}e^{i\pi/3} \right)\right\} \\
&\qquad \qquad\qquad \qquad \qquad \times  \frac{\exp\lb  -v\zeta -\t_2 \zeta^2 + u \w + \t_1 \w^2\rb}{\zeta-\w}, \label{eq:P_13}
\end{align}
where \(\mathrm{S}(e^{-i\pi/3})\) and \(\mathrm{T}(e^{i\pi/3})\) are the parts of the steepest ascent paths \(\mathrm{S}\) and \(\mathrm{T}\) that intersect with a small complex neighborhood of \(e^{-i\pi/3}\) and \(e^{i\pi/3}\), respectively. In order to find suitable transformations of variables, we observe that the function \(x \to \overline{g}(-x)\), using the algebraic function \(\overline{g}\) defined in \eqref{def:gover}, conformally maps a neighborhood of the origin onto a neighborhood of \(e^{-i\pi/3}\), mapping the interval \((-\e, \e)\) onto \(\mathrm{S}(e^{-i\pi/3})\) such that 
\[f(\overline{g}(-x))-\frac{3}{4}e^{-i\pi/3} = x^2, \quad -\e <x < \e.\]
Moreover, we recall from \eqref{algequ4}, that \(y\mapsto g(iy)\) is a conformal map from a neighborhood of the origin to a neighborhood of the point \(e^{i\pi/3}\), mapping the interval \((-\e, \e)\) onto \(\mathrm{T}(e^{i\pi/3})\) such that 
\begin{equation} f(g(iy))-\frac{3}{4}e^{i\pi/3} = -y^2, \quad -\e < y < \e.\end{equation}
In the integral \eqref{eq:P_13}, we perform the change of variables \(\z = \overline{g}(-x)\) and \(\w = g(iy)\), so that we get
\begin{align}\label{eq:P_14} \frac{e^{i \frac{3\sqrt{3}}{4} a^{4/3}}}{( 2\pi i)^2}  \int_{-\e}^{\e} dx \int_{-\e}^{\e} dy ~ \exp\left\{- a^{4/3}\left( x^2 +y^2\right)\right\}\varPhi^*(x,y),
\end{align}
where the function \(\varPhi^*(x,y)\) is given by
\begin{align}\label{varPhi_P*} \varPhi^*(x,y) = \frac{\exp\left\{ -v\overline{g}(-x) -\t_2 (\overline{g}(-x))^2+u g(iy) +\t_1 (g(iy))^2 \right\}}{\overline{g}(-x)-g(iy)}\frac{d}{dx}\overline{g}(-x) \frac{d}{dy}\left(g(iy)\right).
\end{align}
Recalling that \(\overline{g(\overline{x})} = \overline{g}(x)\) and using \(\overline{ \overline{g}(i\overline{y})} =g(-iy)\), we can see that 
\[ \overline{\varPhi (\overline{x}, \overline{y})} = \varPhi^*(-x,-y),\]
where the function \(\varPhi(x,y)\) is defined in \eqref{varPhi_P}.
This is sufficient to link the coefficients of the series representation of \(\varPhi^*(x,y)\) to those of the series representation of \(\varPhi(x,y)\) by complex conjugation and negation, so that we have
\begin{align}\label{varPhi*_exp_P} \varPhi^* (x,y)= \sum_{k,l \geq 0} (-1)^{k+l}\overline{c_{k,l}} x^k y^l.
\end{align}
Substituting \eqref{varPhi*_exp_P} into \eqref{eq:P_14}, and interchanging integration and summation, we obtain the following contribution of the saddle point \(\left(e^{-i\pi/3}, e^{i\pi/3}\right)\) to  \(J_{\t_1, \t_2}^{a}(u,v)\)
\begin{align}&\frac{1}{( 2\pi i)^2} \int\limits_{\mathrm{S}(e^{-i\pi/3})} d\zeta \int\limits_{\mathrm{T}(e^{i\pi/3})} d\w ~\exp\left\{-a^{4/3}\left(f(\zeta)-f(\w)\right)\right\}\frac{\exp\lb  -v\zeta -\t_2 \zeta^2 + u \w + \t_1 \w^2\rb}{\zeta-\w}\\
&\approx \sum_{k,l \geq 0} \frac{(-1)^{k+l} \overline{c_{k,l}} e^{i\frac{3\sqrt{3}}{4} a^{\frac{4}{3}}}}{a^{\frac{2}{3} (k+l+2)}} \frac{  C_k C_l }{( 2\pi i)^2} = \sum_{k,l \geq 0} \frac{ \overline{c_{2k,2l}}e^{i\frac{3\sqrt{3}}{4} a^{\frac{4}{3}}}}{a^{\frac{4}{3} (k+l+1)}} \frac{ \Gamma(k+\frac{1}{2})\Gamma(l+\frac{1}{2}) }{( 2\pi i)^2}  \\
& = \sum_{\nu =1}^{\infty} \frac{1}{(2\pi i)^2 a^{\frac{4}{3} \nu}} \sum_{\substack{k,l\geq 0 \\ k+l = \nu -1}}\overline{ c_{2k, 2l}} e^{i \frac{3\sqrt{3}}{4} a^{\frac{4}{3}}}  \Gamma\left(k+\frac{1}{2}\right) \Gamma\left(l+\frac{1}{2}\right),\label{eq:P_15}
\end{align}
as \(a\to +\infty\), uniformly in \(u,v,\t_1, \t_2\), each coming from a compact subset of \(\mathbb{R}\), where the coefficients \(C_k\) are given in \eqref{def:C}.

\subsection{Contribution from \(\left(e^{-i\pi/3}, e^{-i\pi/3}\right)\)}

For this last contribution we look at a small neighborhood of the point \(\left(e^{-i\pi/3}, e^{-i\pi/3}\right)\), considering the integral 
\begin{align}\label{eq:P_16}
\frac{1}{( 2\pi i)^2} \int\limits_{\mathrm{S}(e^{-i\pi/3})} d\zeta \int\limits_{\mathrm{T}(e^{-i\pi/3})} d\w ~\exp\left\{-a^{4/3}\left(f(\zeta)-f(\w)\right)\right\}\frac{\exp\lb  -v\zeta -\t_2 \zeta^2 + u \w + \t_1 \w^2\rb}{\zeta-\w}\qquad
\end{align}
where \(\mathrm{S}(e^{-i\pi/3})\) and \(\mathrm{T}(e^{-i\pi/3})\) are the parts of the steepest ascent paths \(\mathrm{S}\) and \(\mathrm{T}\) that intersect with a small complex neighborhood of \(e^{-i\pi/3}\). Again, we recall the function \(g\) from \eqref{algequ3}, and use the function \(\overline{g}\) from \ref{def:gover} in order to define the local changes of variables. We have for \(\z = \overline{g}(-x)\) and \(\w = \overline{g}(iy)\)

\[f(\z) -\frac{3}{4}e^{-i\pi/3} = x^2, \quad -\e < x < \e,\]
\[f(\w) -\frac{3}{4}e^{-i\pi/3} = -y^2, \quad -\e < y < \e,\]
so that under this change of variables \eqref{eq:P_16} becomes
\begin{align}\label{eq:P_17} \frac{1}{( 2\pi i)^2}  \int_{-\e}^{\e} dx \int_{-\e}^{\e} dy ~ \exp\left\{- a^{4/3}\left( x^2 +y^2\right)\right\}\frac{\varphi^*(x,y)}{x+iy},
\end{align}
where the function \(\varphi^*(x,y)\) is given by
\begin{align}\label{varphi*_P} \varphi^*(x,y) = &\frac{x+iy}{\overline{g}(-x)-\overline{g}(iy)}\exp\left\{ -v\overline{g}(-x) -\t_2 (\overline{g}(-x))^2+u\overline{g}(iy) +\t_1 (\overline{g}(iy))^2 \right\}\\
&\times\frac{d}{dx}\left(\overline{g}(-x)\right) \frac{d}{dy}\left(\overline{g}(iy)\right).
\end{align}
Using the definition on \(\overline{g}\) and the identity \(\overline{\overline{g}(i\overline{y})} = g(-iy)\) for all \(y\in U_{\e}(0)\), it follows without difficulty that we have 
\[ \overline{\varphi (\overline{x}, \overline{y})} = -\varphi^*(-x,-y),\]
where the function \(\varphi(x,y)\) is defined in \eqref{varphi_P}, from which we infer that the coefficients of the series representation of \(\varphi^*(x,y)\) are linked to those of the series representation of \(\varphi(x,y)\) by conjugation and negation, so that we have
\begin{align}\label{varphi*_exp_P} \varphi^* (x,y)= \sum_{k,l \geq 0} (-1)^{k+l+1}\overline{b_{k,l}} x^k y^l.
\end{align}
Substituting \eqref{varphi*_exp_P} into \eqref{eq:P_17}, and interchanging integration and summation, we obtain the following contribution of the saddle point \(\left(e^{-i\pi/3}, e^{-i\pi/3}\right)\) to  \(J_{\t_1, \t_2}^{a}(u,v)\)

\begin{align}& \frac{1}{( 2\pi i)^2}  \int_{\mathrm{S}(e^{-i\pi/3})} d\zeta \int_{\mathrm{T}(e^{-i\pi/3})} d\w ~ \exp\left\{ -a^{4/3}\left( f(\z) - f(\w)\right)\right\}\frac{\exp\lb  -v\zeta -\t_2 \zeta^2 + u \w + \t_1 \w^2\rb}{\zeta-\w}\\
&\approx \sum_{k,l \geq 0} \frac{(-1)^{k+l+1}\overline{b_{k,l}}}{a^{\frac{2}{3} (k+l+1)}}  \frac{\overline{B_{k,l}}}{( 2\pi i)^2} \\
&=  \sum_{k,l \geq 0, ~k+l ~\text{odd}} \frac{\overline{b_{k,l}}}{a^{\frac{2}{3} (k+l+1)}}  \frac{\overline{B_{k,l}}}{( 2\pi i)^2} = \sum_{\nu = 1}^{\infty} \frac{1}{ (2\pi i )^2 a^{\frac{4}{3} \nu}} \sum_{\substack{k,l \geq 0 \\ k+l = 2\nu -1  }}\overline{ b_{k,l}B_{k,l}}  \label{eq:P_18}
\end{align}
as \(a \to +\infty\), uniformly in \(u,v,\t_1, \t_2\), each coming from a compact subset of \(\mathbb{R}\), where the coefficients \(B_{k,l}\) are defined in \eqref{def:B}.

\subsection{The complete expansion} We now are ready to collect all contributions and to arrive at a complete asymptotic expansion for the integral \(J_{\t_1, \t_2}^{a}(u,v)\) of \eqref{def:J2}, given by
\begin{align}
	J_{\t_1, \t_2}^{a}(u,v)=\frac{1}{( 2\pi i)^2} \int_{\mathrm{S}} d\zeta \int_{\mathrm{T}} d\w ~\exp\left\{-a^{4/3}\left(f(\zeta)-f(\w)\right)\right\}\frac{\exp\lb  -v\zeta -\t_2 \zeta^2 + u \w + \t_1 \w^2\rb}{\zeta-\w}.
\end{align}
First, we fix compact subsets for \(u,v,\t_1, \t_2\), and observe that the contributions to the asymptotic behavior of \(J_{\t_1, \t_2}^{a}(u,v)\) coming from the parts of the contours away from the four saddle points are of order \(\mathcal{O}\left(e^{- \tilde{c} a^{\frac{4}{3}}}\right)\), as \(a \to +\infty\), where \(\tilde{c}\) is a positive constant, and this is valid uniformly with respect to \(u,v,\t_1, \t_2\). Adding the contributions from \eqref{eq:P_7}, \eqref{eq:P_12}, \eqref{eq:P_15} and \eqref{eq:P_18}, we find

\begin{align} J_{\t_1, \t_2}^{a}(u,v) \approx & \sum_{\nu=1}^{\infty} \frac{2}{(2\pi i)^2 a^{\frac{4}{3} \nu}} \sum_{\substack{k,l \geq 0\\ k+l = 2\nu -1}} \Re\left\{b_{k,l} B_{k,l}\right\}\\
+&\sum_{\nu=1}^{\infty} \frac{2}{(2\pi i)^2 a^{\frac{4}{3} \nu}} \sum_{\substack{k,l \geq 0\\ k+l = \nu -1}} \Re\left\{c_{2k,2l} e^{-i \frac{3\sqrt{3}}{4} a^{\frac{4}{3}}}\right\}\Gamma\left(k+\frac{1}{2}\right)\Gamma\left(l +\frac{1}{2}\right), \label{Complete_Pearcey}
\end{align}
as \(a \to +\infty\), uniformly in \(u,v,\t_1, \t_2\), each coming from a compact subset of \(\mathbb{R}\), for some constant \(c>0\). This leads to \eqref{Exp:PearceytoS}. The statement in \eqref{trans:PearceytoS} now follows by the evaluation of the first terms of this expansion. As we have \(B_{0,0}=0\), the coefficient \(b_{0,0}\) does not appear. Moreover, a computation using \eqref{varphi_P} gives
\begin{align} b_{1,0} &= \frac{\partial \varphi}{\partial x} (x,y) \bigg\vert_{(x,y )= (0,0)}\\
&=\frac{2i}{3} e^{(u-v)/2 -(\t_1 -\t_2)/2 + i \frac{\sqrt{3}}{2} \left( u-v+\t_1 -\t_2\right)} \left\{ \frac{1}{3}+\frac{v}{2}-\t_2 +\frac{\sqrt{3}i}{2}(v+2\t_2) \right\},
\end{align}
and
\begin{align} b_{0,1} &= \frac{\partial \varphi}{\partial y}(x,y) \bigg\rvert_{(x,y )= (0,0)} \\
&= \frac{2}{3} e^{(u-v)/2 -(\t_1 -\t_2)/2 + i \frac{\sqrt{3}}{2} \left( u-v+\t_1 -\t_2\right)} \left\{ -\frac{1}{3}+\frac{u}{2}-\t_1 +\frac{\sqrt{3}i}{2}(u+2\t_1) \right\}.
\end{align}
This gives us
\begin{align} &\sum_{k,l \geq 0,~ k+l = 1} \frac{2\Re\left\{  b_{k,l} B_{k,l}\right\}}{(2\pi i)^2  a^{\frac{2}{3} (k+l+1)}} = -\frac{2}{3}\frac{\pi}{(2\pi i)^2} \frac{\exp\left\{\frac{u-v}{2}-\frac{\t_1 -\t_2}{2}\right\}}{a^{\frac{4}{3}}}\\
&\times \bigg\{\left( \frac{u+v}{2} -(\t_1 + \t_2)\right)\sin\left(\frac{\sqrt{3}}{2} (u-v+\t_1 -\t_2)\right)\\
 &+ \sqrt{3}\left( \frac{u+v}{2}+ (\t_1 + \t_2)\right)\cos\left(\frac{\sqrt{3}}{2} (u-v+\t_1 -\t_2)\right)\bigg\},
\end{align}

and evaluating the first term corresponding to \(k=l=0\) of the second series in \eqref{Complete_Pearcey} gives
\begin{align}&\frac{2 \Re\left\{c_{0,0} e^{-i \frac{3\sqrt{3}}{4} a^{\frac{4}{3}}}\right\} \left(\Gamma(\frac{1}{2})\right)^2}{(2\pi i)^2 a^{\frac{4}{3}}}\\
& =\frac{4}{3\sqrt{3}} \frac{\pi}{(2\pi i)^2}  \frac{\exp\left\{\frac{u-v}{2}-\frac{\t_1 -\t_2}{2}\right\}}{a^{\frac{4}{3}}} \cos\left(\frac{3\sqrt{3}}{4} a^{\frac{4}{3}} -\frac{u+v}{2}-\frac{\t_1 +\t_2}{2}\right).
\end{align}
This leads to statement \eqref{trans:PearceytoS}.

\end{proof}

\printbibliography

\end{document}